\documentclass[11pt]{article}
\usepackage{amsmath, amssymb,paralist,amsthm}

\def\PC{\mathcal{P}}
\def\MC{\mathcal{M}}

\def\LC{\mathcal{L}}

\def\FC{\mathcal{F}}

\def\E{\mathbf{E}}

\def\P{\mathbf{P}}
\def\R{\mathbf{R}}

\def\1{\mathbf{1}}

\def\al{\alpha}

\newcommand{\om}{\omega}

\newcommand{\Om}{\Omega}

\def\B{\mathbf{B}}
\def\D{\mathbf{D}}
\def\E{\mathbb{E}}
\def\R{\mathbf{R}}

\def\M{\mathcal{M}}

\def\P{\mathcal{P}}

\newtheorem{prop}{Proposition}[section]
\newtheorem{theorem}{Theorem}[section]

\newtheorem{example}{Example}[section]
\newtheorem{remark}{Remark}[section]

\numberwithin{equation}{section}


\begin{document}

\centerline{\Large \bf Existence of solutions to path-dependent kinetic equations}
\smallskip
\centerline{\Large \bf and related forward - backward systems}
\bigskip
\bigskip
\centerline{\bf Vassili Kolokoltsov\footnote{Department of Statistics, University of Warwick, Coventry, CV4 7AL, UK, v.kolokoltsov@warwick.ac.uk}
\footnote{Supported by IPI RAN grants RFBR 11-01-12026 and 12-07-00115, and by the grant 4402 of the Ministry of Education and Science of Russia},  Wei Yang\footnote{Department of Mathematics and Statistics, University of Strathclyde Glasgow, G1 1XH, UK, w.yang@strath.ac.uk}}
\bigskip

\begin{abstract}
This paper is devoted to path-dependent  kinetics equations
arising, in particular, from the analysis of the coupled backward -- forward systems of equations of mean field games.
  We present local well-posedness, global existence and some regularity
results for these equations.
\end{abstract}

{\medskip\par\noindent
\smallskip\par\noindent
{\bf Key words}: kinetic equation, global existence, path dependence, nonlinear Markov process, stable processes, coupled backward--forward systems.
}

\section{Introduction}\label{Introduction}

For a Banach space $\B$ we denote by $\B^*$ the dual Banach space of  $\B$. The pairing between $f\in \B$ and $\mu\in \B^*$ is denoted by $(f, \mu)$.
The norm in $\B^*$ is defined by $\|\mu\|_{\B^*}=\sup_{\|f\|_{\B}\leq 1}|(f,\mu)|$. For a $T>0$, we denotes by $C([0,T], \B)$ the Banach space of continuous curves
$\eta_.: [0,T]\to B$ equipped with the norm
$\sup_{t\in[0,T]} \|\eta_t\|_{\B}$.

A deterministic dynamic in $\B^*$ can be naturally specified by a vector-valued ordinary differential equation
\begin{equation}
\label{ODE1}
\dot{\mu_t}=\Psi(t, \mu_t)
\end{equation}
with a given initial value $\mu\in \B^*$, where the mapping $(t,\eta)\mapsto \Psi(t,\eta)$ is from $\R^+\times \B^*$ to $\B^*$.
More generally, one often meets the situations when $\dot \mu$ does not belong to $\B^*$, but to some its extension. Namely,
let $\D$ be a dense subset of $\B$, which is itself a Banach space with the norm $\|\,\|_\D \geq \|\,\|_\B$. A
deterministic dynamic in $\B^*$ can be specified by equation (\ref{ODE1}), where the mapping $(t,\eta)\mapsto \Psi(t,\eta)$ is from $\R^+\times \B^*$ to $\D^*$.
Written in weak form,  equation (\ref{ODE1}) means that, for all $f\in \D$,
\begin{equation}
\label{ODE2}
(f,\dot{\mu_t})=(f,\Psi(t,\mu_t)).
\end{equation}

In many applications, equation (\ref{ODE2}) appears in the form
\begin{equation}
\label{ODE3}
\frac{d}{dt}(f,\mu_t)=(A[t,\mu_t]f,\mu_t), \quad \mu_0=\mu,
\end{equation}
where the mapping $(t,\eta)\mapsto A[t,\eta]$ is from $\R^+\times \B^*$ to bounded linear operators $A[t, \eta]: \D\mapsto \B$
 such that, for each pair $(t,\eta)\in \R^+\times \B^*$, $A[t,\eta]$ generates a strongly continuous semigroup in $\B$.
Of major interest is the case when $\B^*$ is the space of measures on a locally compact space. It turns out that, in this case and under mild 
technical assumptions, an evolution \eqref{ODE2} preserving positivity has to be of form \eqref{ODE3} with the operators $A[t, \eta]$ generating
 Feller processes, see Theorems 6.8.1 and 11.5.1 from \cite{Ko10}.

Equation \eqref{ODE3} will be referred to as the {\it general  kinetic equation}. It contains most of the basic equations from non-equilibrium statistical mechanics and evolutionary biology, see monograph \cite{Ko10} for an extensive discussion.

In this paper we are mostly interested in yet more general equation. Namely, let $\MC$ be a closed convex subset of $\B^*$, which is also closed in $\D^*$.
For a $T>0$, let $C([0,T], \M(\D^*))$ denote a closed convex subset of $C([0,T], \D^*)$ consisting of curves with values in $\M$,
and $C_{\mu}([0,T], \M(\D^*))$ a closed convex subset of $C([0,T], \M(\D^*))$, consisting of curves $\{\eta_.\}$ with initial data $\eta_0=\mu \in \MC$.

The main object of this paper is a "path-dependent" version of equation (\ref{ODE3}), that is
\begin{equation}
\label{eqkineqmeanfield}
 {d \over dt} (f, \mu_t)
 =(A[t,\{\mu_s\}_{0\leq s\leq T} ]f, \mu_t), \quad \mu_0=\mu,
\end{equation}
where $(t,\{\eta_s\}_{0\leq s\leq T})\mapsto A[t,\{\eta_s\}_{0\leq s\leq T}]$
maps $\R^+\times C_{\mu}([0,T], \M(\D^*))$ to bounded linear operators $\D\mapsto \B$.
We refer to equation \eqref{eqkineqmeanfield} as the general {\it path-dependent  kinetic equation}. It should hold for all test functions $f\in \D$.  Compared to equation \eqref{eqkineqmeanfield}, equation \eqref{ODE3} is often referred to as a {\it path-independent} case.

When the operators $A$ only depend on the {\it history} of the trajectory of $\{\mu.\}\in C_{\mu}([0,T], \M(\D^*))$,  that is
\begin{equation}
\label{''adaptive'' kinetic equation}
 {d \over dt} (f, \mu_t)
 =(A[t,\{\mu_{\leq t}\} ]f, \mu_t), \quad \mu_0=\mu,
\end{equation}
we call  \eqref{''adaptive'' kinetic equation}  an  {\it adapted kinetic equation}, where $\{\mu_{\leq t}\}$ is a shorthand for $\{\mu_{s}\}_{0\leq s\leq t}$. Adapted kinetic equations can be seen as analytic analogs of stochastic differential equations with adapted coefficients, and their well-posedness can be obtained by similar methods.
When the generators $A$ only depend on the {\it future} of the trajectory of $\{\mu.\}\in C_{\mu}([0,T], \M(\D^*))$, that is
\begin{equation}
\label{''anticipating'' kinetic equation}
 {d \over dt} (f, \mu_t)
 =(A[t,\{\mu_{\geq t}\}]f, \mu_t), \quad \mu_0=\mu,
\end{equation}
we call \eqref{''anticipating'' kinetic equation} an  {\it anticipating kinetic equation}, where $\{\mu_{\geq t}\}$ is a shorthand for $\{\mu_{s}\}_{t\leq s\leq T}$.

\begin{remark}
The terminology of {\it adaptiveness} and {\it anticipation} here should not be associated with any randomness, as in more standard usage of these words.
\end{remark}

Equation \eqref{eqkineqmeanfield} has many applications.
Let us briefly explain the crucial role played by this equation in the mean field game (MFG) methodology, which
 is based on the analysis of coupled systems of forward -- backward evolutions and which constitutes
 a quickly developing area of research in modern theory of optimization, see detail e.g. in \cite{GLL2010,HMC05,HCM3,HCM10,KLY2012}.

Assume that the objective of an agent described by a controlled stochastic process $X(s)$ (passing through $x$ at time $t$), given an evolution $\hat \mu_.$ of the empirical distributions of a large number of other players, is to maximize (over a suitable class of controls $\{u.\}$) the payoff
\[
V(t,x, \hat \mu_{\ge t}, u_{\ge t})=\E \left[ \int_t^T J(s,X(s),\hat \mu_s,u_s) \, ds +V^T (X(T))\right],
\]
 By dynamic programming the optimal payoff of such an agent
\[
 V(t,x, \hat \mu_{\ge t})=\sup_{u.}V(t,x, \hat \mu_{\ge t}, u_{\ge t})
\]
 should satisfy certain HJB equation (backward evolution). On the other hand, when all optimal controls $\{u_t=u_t(\hat \mu_{\ge t})\}$ are found, the
 empirical measure $ \mu_.$ of the resulting process satisfies the controlled kinetic
  equation of type \eqref{ODE3} (forward equation), that is
 \begin{equation}
\label{ODE3co}
\frac{d}{dt}(f,\mu_t)=(A[t,\mu_t, u_t(\hat \mu_{\ge t})]f,\mu_t), \quad \mu_0=\mu.
\end{equation}
The main consistency condition of MFG is in the requirement that the initial $\hat \mu$ coincides with the resulting $\mu$.
Equalizing $\hat \mu_.=\mu_.$ in \eqref{ODE3co} clearly leads to anticipating kinetic equation of type \eqref{''anticipating'' kinetic equation}.

Our main results concern the well-posedness of adaptive kinetic equations \eqref{''adaptive'' kinetic equation}, the local well-posedness and global existence
of anticipating and general path dependent kinetic equations and finally some regularity result for path-independent equations arising from their
probabilistic interpretations.

The rest of the paper is organised as follows. In Section \ref{mainres} our main results are formulated
and in Section \ref{secabstractnonMark} they are proved. Section \ref{secnonlinMark} yields some regularity results for the solutions of kinetic equations
leading also to simple verifiable conditions for compactness assumption \eqref{eq1thglobalexistgenkin} of our main global existence result.
Section \ref{examples} show some examples.

\section{Main results}
\label{mainres}

 Let us recall the notion of propagators needed for the proper formulation of our results.

For a set $S$, a family of mappings $U^{t,r}$ from $S$ to itself, parametrized by the pairs of numbers $r\leq t$ (resp. $t\leq r$) from a given finite or infinite interval is called a {\it (forward) propagator} (resp. a {\it backward propagator}) in $S$, if $U^{t,t}$ is the identity operator in $S$ for all $t$ and the following {\it chain rule}, or {\it propagator equation}, holds for $r\leq s\leq t$ (resp. for $t\leq s\leq r$):
$$U^{t,s}U^{s,r}=U^{t,r}.$$

A backward propagator ${U^{t,r}}$ of bounded linear operators on a Banach space $\B$ is called {\it strongly continuous} if the operators ${U^{t,r}}$ depend strongly continuously on t and r.

Suppose ${U^{t,r}}$ is a strongly continuous backward propagator of bounded linear operators on a Banach space with a common invariant domain $\D$. Let ${A_t}$, $t\geq0$, be a family of bounded linear operators $\D\mapsto \B$ that are strongly continuous in $t$ outside a set $S$ of zero-measure in $\R$. Let us say that the family ${A_t}$ {\it generates} ${U^{t,r}}$ on $\D$ if,
for any $f\in \D$, the equations
\begin{equation}
\label{Generates}
\frac{d}{ds}U^{t,s}f = U^{t,s}A_sf, \quad \frac{d}{ds}U^{s,r}f = -A_sU^{s,r}f, \quad 0\leq t\leq s\leq r,
\end{equation}
hold for all $s$ outside $S$ with the derivatives taken in the topology of $B$. In particular, if the operators $A_t$ depend strongly continuously on $t$,  equations (\ref{Generates}) hold for all $s$ and $f\in \D$, where for $s=t$ (resp. $s=r$) it is assumed to be only a right (resp. left) derivative. In the case of propagators in the space of measures, the second equation in (\ref{Generates}) is called {\it  the backward Kolomogorov equation}.

We can now formulate our main results.

\begin{theorem} [local well-posedness for general "path-dependent" case]
\label{thkineticeq}
Let $\MC$ be a bounded convex subset of $\B^*$ with $\sup_{\mu \in \MC} \|\mu \|_{\B^*} \le K$, which is closed in the norm topologies of both $\B^*$ and $\D^*$. Suppose that

(i) the linear operators $A[t,\{\xi.\}]: \D\mapsto \B$ are uniformly bounded and Lipschitz in $\{\xi.\}$, i.e. for any $\{\xi.\}, \{\eta.\}\in C_{ \mu}([0,T], \M(\D^*))$
	\begin{equation}\label{Lip_A}
		\sup_{t\in [0,T]} \|A[t, \{\xi.\}]-A[t,\{\eta.\}]\|_{\D\mapsto \B}\leq c_1\sup_{t\in[0,T]}||\xi_t-\eta_t||_{\D^*},
	\end{equation}
\begin{equation}
\label{Lip_Aa}
\sup_{t\in [0,T]} \|A[t, \{\xi.\}]\|_{\D\mapsto \B}\leq c_1
\end{equation}
for a positive constant $c_1$;

(ii) for any $\{\xi_.\}\in C_{\mu}([0,T],\M(\D^*))$, let the operator curve $A[t,\{\xi_.\}]:\D\mapsto \B$ generate a strongly continuous backward propagator of bounded linear operators $U^{t,s}[\{\xi_.\}]$ in $\B$, $0 \leq t \leq s$, on the common invariant domain $\D$, such that
	\begin{equation}\label{BDD}
		||U^{t,s}[\{\xi.\}]||_{\D\mapsto \D}\leq c_2\,\, \text{and}\,\,||U^{t,s}[\{\xi.\}]||_{\B\mapsto \B}\leq c_3, \quad t\leq s,
	\end{equation}
for some positive constants $c_2,c_3$, and with their dual propagators $\tilde{U}^{s,t}[\{\xi_.\}]$ preserving the set $\M$.

Then, if
\begin{equation}
\label{eq0thkineticeq}
c_1c_2c_3 K T< 1,
\end{equation}
 the Cauchy problem
\begin{equation}
\label{weak nonlinear Cauchy problem}
\frac{d}{dt}(f,\mu_t)=(A[t,\{\mu.\}]f, \mu_t),\quad \mu_0=\mu, \,\,t\in [0,T],
\end{equation}
is  well posed, that is for any $\mu\in \M$, it has a unique solution $\Phi^t(\mu) \in \M$ (that is (\ref{weak nonlinear Cauchy problem}) holds for all $f\in \D$) that depends Lipschitz continuously on time $t$ and the initial data in the norm of $\D^*$, i.e.
\begin{equation}
\label{eq2thkineticeq}
\|\Phi^s(\mu)-\Phi^t (\mu)\|_{\D^*}\leq  c_1 c_2 (s-t), \quad 0\le t \le s \le T,
\end{equation}
and for $\mu,\eta\in\M$
\begin{equation}
\label{eq1thkineticeq}
\begin{split}
\|\{\Phi^.(\mu)\}-\{\Phi^.(\eta)\}\|_{C([0,T],\M(\D^*))}
&\leq \frac{c_2}{1-c_1c_2c_3KT}\|\mu-\eta\|_{\D^*}.
\end{split}
\end{equation}
\end{theorem}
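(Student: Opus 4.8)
The plan is to recast the path-dependent problem as a fixed-point problem for a nonlinear map on the space $C_\mu([0,T],\M(\D^*))$ and to apply the Banach contraction principle, the smallness condition \eqref{eq0thkineticeq} being exactly what turns the relevant map into a contraction.

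First I would freeze the path. For a fixed $\{\xi.\}\in C_\mu([0,T],\M(\D^*))$ the operator family $A_s:=A[s,\{\xi.\}]$ generates, by hypothesis (ii), the backward propagator $U^{t,s}[\{\xi.\}]$, and I claim the unique solution of the associated \emph{linear} weak equation with datum $\mu$ is $\mu_t=\tilde U^{t,0}[\{\xi.\}]\mu$. Indeed $(f,\mu_t)=(U^{0,t}[\{\xi.\}]f,\mu)$, and differentiating via the first identity in \eqref{Generates} gives $\frac{d}{dt}(f,\mu_t)=(U^{0,t}A_tf,\mu)=(A_tf,\mu_t)$; the $\M$-preservation of the dual propagator guarantees $\mu_t\in\M$, and integrating the weak equation while bounding its right-hand side through \eqref{Lip_Aa} and the uniform bound $K$ on $\M$ yields a Lipschitz-in-time estimate as in \eqref{eq2thkineticeq}. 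This defines a map $\Gamma:\{\xi.\}\mapsto\{\tilde U^{.,0}[\{\xi.\}]\mu\}$ from $C_\mu([0,T],\M(\D^*))$ into itself, whose fixed points are precisely the solutions of \eqref{weak nonlinear Cauchy problem}.

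The heart of the argument is the contraction estimate, for which I would compare the two frozen propagators by a Duhamel (telescoping) identity. Writing $A_s^\xi=A[s,\{\xi.\}]$ and $A_s^\eta=A[s,\{\eta.\}]$ and differentiating $s\mapsto U^{0,s}[\{\xi.\}]U^{s,t}[\{\eta.\}]f$ with both identities in \eqref{Generates}, the derivative of the mixed product reduces to $U^{0,s}[\{\xi.\}]\bigl(A_s^\xi-A_s^\eta\bigr)U^{s,t}[\{\eta.\}]f$, whence
\[
U^{0,t}[\{\xi.\}]f-U^{0,t}[\{\eta.\}]f=\int_0^t U^{0,s}[\{\xi.\}]\bigl(A_s^\xi-A_s^\eta\bigr)U^{s,t}[\{\eta.\}]f\,ds .
\]
Taking $\B$-norms and using $\|U^{0,s}[\{\xi.\}]\|_{\B\mapsto\B}\le c_3$, the Lipschitz bound \eqref{Lip_A} for $A_s^\xi-A_s^\eta$, and $\|U^{s,t}[\{\eta.\}]\|_{\D\mapsto\D}\le c_2$ from \eqref{BDD}, then pairing with $\mu$ (using $\|\mu\|_{\B^*}\le K$) and taking the supremum over $\|f\|_\D\le1$, I obtain
\[
\sup_{t\in[0,T]}\|\Gamma(\{\xi.\})_t-\Gamma(\{\eta.\})_t\|_{\D^*}\le c_1c_2c_3KT\sup_{t\in[0,T]}\|\xi_t-\eta_t\|_{\D^*}.
\]
By \eqref{eq0thkineticeq} the factor $c_1c_2c_3KT<1$, so $\Gamma$ is a contraction on the complete metric space $C_\mu([0,T],\M(\D^*))$ (complete because it is a closed subset of $C([0,T],\D^*)$, the set $\M$ being $\D^*$-closed and convex); the contraction principle then produces a unique fixed point $\{\Phi^.(\mu)\}$, proving well-posedness.

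Finally, for the Lipschitz dependence on the initial data \eqref{eq1thkineticeq} I would set $\mu_t=\Phi^t(\mu)$, $\nu_t=\Phi^t(\eta)$ and split $\mu_t-\nu_t=\tilde U^{t,0}[\{\mu.\}](\mu-\eta)+\bigl(\tilde U^{t,0}[\{\mu.\}]-\tilde U^{t,0}[\{\nu.\}]\bigr)\eta$. The first term is bounded by $c_2\|\mu-\eta\|_{\D^*}$ using the $\D\mapsto\D$ bound in \eqref{BDD}, while the second is bounded, by the very same Duhamel estimate as above, by $c_1c_2c_3KT\sup_t\|\mu_t-\nu_t\|_{\D^*}$; absorbing this last term into the left-hand side with the help of \eqref{eq0thkineticeq} gives \eqref{eq1thkineticeq}. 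The main obstacle is the Duhamel comparison step: one must differentiate the mixed product of the two propagators correctly and, crucially, keep track of which operator norm—the $\B\mapsto\B$ bound $c_3$ or the $\D\mapsto\D$ bound $c_2$—applies to each factor, since it is precisely this bookkeeping that produces the product $c_1c_2c_3K$ and hence the sharp smallness threshold \eqref{eq0thkineticeq}.
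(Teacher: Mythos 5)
Your proposal is correct and follows essentially the same route as the paper: the fixed-point map $\{\xi.\}\mapsto\{\tilde U^{\cdot,0}[\{\xi.\}]\mu\}$, the Duhamel comparison of the two frozen propagators (the paper differentiates $r\mapsto U^{0,r}[\{\xi^2_.\}]U^{r,t}[\{\xi^1_.\}]$, exactly your mixed product), the same $c_1c_2c_3K$ norm bookkeeping yielding the contraction under \eqref{eq0thkineticeq}, and the same splitting $\tilde U^{t,0}[\{\mu.\}](\mu-\eta)+(\tilde U^{t,0}[\{\mu.\}]-\tilde U^{t,0}[\{\nu.\}])\eta$ (with the roles of the two terms merely interchanged) absorbed via the smallness condition to get \eqref{eq1thkineticeq}. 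The only discrepancy is cosmetic: your direct integration of the weak equation gives the time-Lipschitz constant $c_1K$ rather than the paper's stated $c_1c_2$, which does not affect the argument.
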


\begin{theorem}(global well-posedness for general "path-independent" case)
\label{propkineticeqnonlMark}
Under the assumptions in Theorem \ref{thkineticeq}, but without the locality constraint
\eqref{eq0thkineticeq},
the Cauchy problem for kinetic equation
\begin{equation}
\label{eq1propkineticeqnonlMark}
\frac{d}{dt}(f,\mu_t)=(A[t,\mu_t]f,\mu_t),\quad \mu_s=\mu
\end{equation}
is well-posed, i.e. for any $\mu \in \MC$, $s\in [0,T]$,  it has a
unique solution $\tilde U^{t,s}(\mu) \in \MC$, $t\in [s,T]$, and the transformations $\tilde U^{t,s}$ of
$\MC$ form a propagator depending Lipschitz
continuously on time $t$ and the initial data in the norm of
$\D^*$, i.e.
\begin{equation}
\label{eqLipcontnonlinearprop}
 \|\tilde U^{t,s}(\mu)-\tilde U^{t,s}(\eta)\|_{\D^*} \le c(T, K) \|\mu-\eta \|_{\D^*},
\end{equation}
with a constant $c(T,K)$ depending on $T$ and $K$.
\end{theorem}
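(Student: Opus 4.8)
The plan is to obtain the global statement from the local one (Theorem~\ref{thkineticeq}) by a continuation argument on a uniform time grid, exploiting the fact that the path-independent equation \eqref{eq1propkineticeqnonlMark} is a special case of the general path-dependent Cauchy problem of Theorem~\ref{thkineticeq}, obtained by setting $A[t,\{\mu.\}]=A[t,\mu_t]$. The crucial feature of the path-independent case is that all the constants $c_1,c_2,c_3,K$ entering hypotheses (i)--(ii) are uniform in the starting time and in the initial datum (the latter because $\M$ is bounded by $K$), so the length of the interval on which local well-posedness holds can be chosen independently of where we start. This is exactly what allows the local solutions to be concatenated into a global one and, because the generator has no memory, to be assembled into a genuine propagator.

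Concretely, I would first fix a step $\tau>0$ with $c_1c_2c_3K\tau<1$, for instance $\tau=\tfrac{1}{2c_1c_2c_3K}$, so that \eqref{eq0thkineticeq} holds on every interval of length $\tau$. Starting from $\mu_s=\mu\in\M$, Theorem~\ref{thkineticeq} applied on $[s,s+\tau]$ yields a unique solution $\Phi^t(\mu)\in\M$ for $t\in[s,s+\tau]$; in particular its value $\mu_{s+\tau}=\Phi^{s+\tau}(\mu)$ again lies in $\M$, since the solution is asserted to take values in $\M$. One may therefore restart the equation at time $s+\tau$ with this new initial datum, apply Theorem~\ref{thkineticeq} again on $[s+\tau,s+2\tau]$ with the very same constants, and iterate. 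After $N=\lceil (T-s)/\tau\rceil$ steps the whole interval $[s,T]$ is covered, and gluing the pieces produces a solution $\tilde U^{t,s}(\mu)\in\M$ on $[s,T]$. Uniqueness is inherited step by step from the local uniqueness in Theorem~\ref{thkineticeq}: any two global solutions must agree on $[s,s+\tau]$, hence share the same restart value, hence agree on the next interval, and so on.

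It remains to verify the propagator identity and the global Lipschitz bound. For the chain rule $\tilde U^{t,s}=\tilde U^{t,u}\tilde U^{u,s}$, $s\le u\le t$, I would note that, since $A[t,\mu_t]$ depends only on the current value, the restriction to $[u,t]$ of the solution started at $(s,\mu)$ solves \eqref{eq1propkineticeqnonlMark} on $[u,t]$ with initial value $\tilde U^{u,s}(\mu)$; uniqueness then forces it to coincide with $\tilde U^{t,u}\bigl(\tilde U^{u,s}(\mu)\bigr)$. The estimate \eqref{eq2thkineticeq} on each subinterval gives the stated time-Lipschitz continuity. For dependence on initial data, the local bound \eqref{eq1thkineticeq} provides a factor $c_2/(1-c_1c_2c_3K\tau)=2c_2$ on each step, so composing across the $N$ steps yields
\begin{equation*}
\|\tilde U^{t,s}(\mu)-\tilde U^{t,s}(\eta)\|_{\D^*}\le (2c_2)^{N}\|\mu-\eta\|_{\D^*},
\end{equation*}
which is \eqref{eqLipcontnonlinearprop} with $c(T,K)=(2c_2)^{\lceil 2c_1c_2c_3K T\rceil}$, finite for every finite $T$. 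The main point requiring care is precisely this bookkeeping: one must check that the constants genuinely do not deteriorate from step to step, which rests on the uniform bound $K$ on $\M$ together with the \emph{path-independence} guaranteeing that each restarted problem satisfies hypotheses (i)--(ii) with identical $c_1,c_2,c_3$, and one must accept the unavoidable exponential-in-$T$ growth of the global Lipschitz constant produced by the composition.
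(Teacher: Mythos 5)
Your proposal is correct and follows essentially the same route as the paper, whose proof of Theorem \ref{propkineticeqnonlMark} is exactly the continuation argument you describe ("extending local unique solutions of \eqref{weak nonlinear Cauchy problem} via iterations, as is routinely performed in the theory of ordinary differential equations"), only stated without details. Your write-up simply fills in what the paper leaves implicit: the uniform step size $\tau$ from the uniform constants $c_1,c_2,c_3,K$, the gluing and step-by-step uniqueness, the propagator identity via path-independence, and the exponential-in-$T$ bookkeeping for the Lipschitz constant.
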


\begin{theorem}(global wellposedness for an "adapted" case)
\label{thkineticeqnonanticipating}
Under the assumptions in Theorem \ref{thkineticeq}, but without the locality constraint
\eqref{eq0thkineticeq},
the Cauchy problem
\begin{equation}
\label{weak nonlinear Cauchy problem global NE}
\frac{d}{dt}(f,\mu_t)=(A[t,\{\mu_{\leq t}\}]f, \mu_t),\quad \mu_0=\mu,
\end{equation}
is well posed in $\M$ and its unique solution depends Lipschitz continuously on initial data in the norm of $\D^*$.
\end{theorem}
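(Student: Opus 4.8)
The plan is to bootstrap the local result of Theorem \ref{thkineticeq} into a global one by exploiting adaptedness: since the generator $A[t,\{\mu_{\le t}\}]$ at time $t$ only sees the past, the solution on an initial segment can never be altered by what happens later, so one may solve on consecutive short intervals and glue the pieces, with no blow-up obstruction because $\M$ is bounded and invariant under the propagators. Concretely, I would fix $\tau>0$ with $c_1c_2c_3K\tau<1$ and a partition $0=t_0<t_1<\dots<t_N=T$ with $t_{k+1}-t_k\le\tau$, noting that $N\approx T/\tau$ is finite since $\tau$ is fixed.

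For existence I would proceed by stepwise construction. On $[t_0,t_1]$ the adapted equation \eqref{weak nonlinear Cauchy problem global NE} is exactly an equation of the form \eqref{weak nonlinear Cauchy problem} over an interval of length $\le\tau$, since for $t\in[t_0,t_1]$ the argument $\{\mu_{\le t}\}$ lies inside $\{\mu_s\}_{0\le s\le t_1}$; hence Theorem \ref{thkineticeq}, applied with $T$ replaced by $\tau$, yields a unique solution in $\M$ there. Suppose the solution has been determined on $[0,t_k]$. For $t\in[t_k,t_{k+1}]$ I would write $A[t,\{\mu_{\le t}\}]=\tilde A_k[t,\{\nu_{\le t}\}]$, freezing the already-fixed history $\{\mu_s\}_{0\le s\le t_k}$ as a parameter while $\{\nu.\}$ ranges over curves on $[t_k,t_{k+1}]$; this frozen operator family inherits the bounds \eqref{Lip_A}--\eqref{Lip_Aa} and the propagator estimates \eqref{BDD} with the \emph{same} constants $c_1,c_2,c_3,K$, and the interval again has length $\le\tau$, so Theorem \ref{thkineticeq} applies and produces a unique $\mu^{(k+1)}\in\M$ on $[t_k,t_{k+1}]$ started at the previous endpoint. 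Gluing the pieces gives a curve in $C([0,T],\M(\D^*))$ that, in the integrated form $(f,\mu_t)=(f,\mu)+\int_0^t(A[s,\{\mu_{\le s}\}]f,\mu_s)\,ds$, solves \eqref{weak nonlinear Cauchy problem global NE} on all of $[0,T]$ and stays in $\M$ by invariance, closedness and convexity of $\M$.

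For uniqueness and Lipschitz dependence I would argue directly rather than compound the per-interval estimates. Once a solution path is inserted into the generator, $s\mapsto A[s,\{\mu_{\le s}\}]$ is a fixed strongly continuous family generating, via \eqref{Generates}, a propagator whose dual represents the solution as $\mu_t=\tilde U^{t,0}[\{\mu.\}]\mu$. For two data $\mu,\eta\in\M$ with solutions $\mu_t,\nu_t$, writing $\tilde A^\mu_s=(A[s,\{\mu_{\le s}\}])^*$ and similarly $\tilde A^\nu_s$, the Duhamel identity
\[
\mu_t-\nu_t=\tilde U^{t,0}[\{\mu.\}](\mu-\eta)+\int_0^t \tilde U^{t,s}[\{\mu.\}]\bigl(\tilde A^\mu_s-\tilde A^\nu_s\bigr)\tilde U^{s,0}[\{\nu.\}]\eta\,ds
\]
together with the dual bounds $\|\tilde U\|_{\D^*\to\D^*}\le c_2$, $\|\tilde U\|_{\B^*\to\B^*}\le c_3$ from \eqref{BDD}, the Lipschitz bound \eqref{Lip_A}, and $\|\eta\|_{\B^*}\le K$ gives, with $\phi(t)=\sup_{r\le t}\|\mu_r-\nu_r\|_{\D^*}$, the Volterra inequality
\[
\phi(t)\le c_2\|\mu-\eta\|_{\D^*}+c_1c_2c_3K\int_0^t\phi(s)\,ds .
\]
The crucial point is that the generator difference at time $s$ is controlled by $\sup_{r\le s}\|\mu_r-\nu_r\|_{\D^*}$, not by a supremum over all of $[0,T]$; this is exactly where adaptedness enters, turning the fixed-point obstruction of the general case into an ordinary Gronwall estimate. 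Gronwall's lemma then yields $\sup_{t\in[0,T]}\|\mu_t-\nu_t\|_{\D^*}\le c_2 e^{c_1c_2c_3KT}\|\mu-\eta\|_{\D^*}$, giving the asserted Lipschitz dependence with a constant $c(T,K)=c_2e^{c_1c_2c_3KT}$ and, on setting $\mu=\eta$, uniqueness.

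The main obstacle is not any single step but making the two interlock. I must check that freezing the past still produces an operator family meeting all hypotheses of Theorem \ref{thkineticeq} with constants independent of $k$, so that the number of steps stays finite; and I must handle the fact that perturbing the initial datum simultaneously perturbs both the starting value and the frozen history on every later interval. Simply composing the per-step Lipschitz constants $c_2/(1-c_1c_2c_3K\tau)$ over $N$ intervals would still give a finite bound, but the global Gronwall argument is cleaner and yields the sharper exponential constant; its validity rests on the propagator representation of solutions, which I would justify from the generation property \eqref{Generates}.
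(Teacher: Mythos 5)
Your proposal is correct, but it reaches existence by a genuinely different construction than the paper. The paper proves existence by a global Picard iteration: it sets $\xi^0_t\equiv\mu$ and $\xi^n_t=\tilde U^{t,0}[\xi^{n-1}_{\le t}]\mu$, and uses adaptedness (exactly as you do in your Gronwall step) to get the Volterra-type bound $\sup_{r\le t}\|\xi^n_r-\xi^{n-1}_r\|_{\D^*}\le c_1c_2c_3K\int_0^t\sup_{r\le s}\|\xi^{n-1}_r-\xi^{n-2}_r\|_{\D^*}\,ds$, hence $\frac{1}{n!}(c_1c_2c_3Kt)^n$ by induction, so the approximations converge on all of $[0,T]$ at once --- no partition of the time interval and no appeal to Theorem \ref{thkineticeq} is needed. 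You instead globalize by continuation: partition $[0,T]$ into steps of length $\tau$ with $c_1c_2c_3K\tau<1$, freeze the already-constructed history, and apply the local theorem on each step; this is legitimate precisely because adaptedness lets the frozen family inherit \eqref{Lip_A}, \eqref{Lip_Aa} and \eqref{BDD} with the same constants (one should note explicitly that a curve on $[t_k,t_{k+1}]$ starting at $\mu_{t_k}$ concatenates continuously with the frozen past and extends, say constantly, to a curve in $C_\mu([0,T],\M(\D^*))$ to which hypothesis (ii) applies). Interestingly, your continuation scheme is the same device the paper invokes to prove Theorem \ref{propkineticeqnonlMark}, while for the adapted case the paper prefers the iteration. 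For uniqueness and Lipschitz dependence your argument and the paper's are essentially identical: both insert the solutions into the propagator representation, use adaptedness to bound the generator difference at time $s$ by $\sup_{r\le s}\|\mu_r-\nu_r\|_{\D^*}$ rather than a supremum over all of $[0,T]$, and conclude by Gronwall with the same constant $c_2e^{c_1c_2c_3KT}$. What your route buys is that existence needs no estimate beyond Theorem \ref{thkineticeq}; what the paper's buys is a self-contained construction with an explicit $1/n!$ convergence rate and no bookkeeping about frozen histories. One glossed point: your side remark that one could alternatively compose the per-step Lipschitz constants is not immediate, since estimate \eqref{eq1thkineticeq} is for a fixed generator family, whereas on later steps the two solutions feed different frozen histories into the generator; but as you discard that option in favor of the global Gronwall bound, this does not affect your proof.
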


\begin{theorem}[global existence of the solution for general "path dependent" case]
\label{thglobalexistgenkin}
Under the assumptions in Theorem \ref{thkineticeq}, but without the locality constraint
\eqref{eq0thkineticeq}, assume additionally that for any $t$ from a dense subset of $[0,T]$, the set
\begin{equation}
\label{eq1thglobalexistgenkin}
\{\tilde{U}^{t,0}[\{\xi.\}]\mu:\,\,\{\xi.\} \in C_\mu([0,T],\MC(\D^*))\}
\end{equation}
 is relatively compact in $\M$. Then a solution to the Cauchy problem
\begin{equation}
\label{weak nonlinear Cauchy problem-GE}
\frac{d}{dt}(f,\mu_t)=(A[t,\{\mu_s\}_{s\in[0,T]}]f, \mu_t),\quad \mu_0=\mu, t\geq 0,
\end{equation}
exists in $\M$.
\end{theorem}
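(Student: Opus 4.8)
The plan is to realise a solution as a fixed point of the frozen-path solution operator and to apply the Schauder fixed point theorem, the new hypothesis \eqref{eq1thglobalexistgenkin} supplying exactly the compactness that the now-discarded smallness condition \eqref{eq0thkineticeq} can no longer provide. For each frozen path $\{\xi.\}\in C_\mu([0,T],\MC(\D^*))$, assumption (ii) of Theorem \ref{thkineticeq} furnishes the backward propagator generated by $s\mapsto A[s,\{\xi.\}]$ together with its dual $\tilde U^{t,0}[\{\xi.\}]$, and I set $\Phi(\{\xi.\})_t:=\tilde U^{t,0}[\{\xi.\}]\mu$ for $t\in[0,T]$. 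By the generation property, $t\mapsto\Phi(\{\xi.\})_t$ solves the \emph{linear} equation $\frac{d}{dt}(f,\mu_t)=(A[t,\{\xi.\}]f,\mu_t)$ with $\mu_0=\mu$; since $\tilde U^{t,0}[\{\xi.\}]$ preserves $\MC$ and $\Phi(\{\xi.\})_0=\mu$, the operator $\Phi$ maps $C_\mu([0,T],\MC(\D^*))$ into itself, and a fixed point of $\Phi$ is precisely a solution of \eqref{weak nonlinear Cauchy problem-GE}. As $C_\mu([0,T],\MC(\D^*))$ is a closed convex subset of the Banach space $C([0,T],\D^*)$, this is the natural arena for Schauder.

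Two of the Schauder hypotheses are inherited from the machinery of Theorem \ref{thkineticeq}. \emph{Continuity of $\Phi$} follows from the Duhamel comparison behind \eqref{eq1thkineticeq}: applied to two frozen paths $\{\xi.\},\{\eta.\}$ with the common initial datum $\mu$, it writes $(f,\Phi(\xi)_t-\Phi(\eta)_t)$ as $\int_0^t\big((A[s,\{\xi.\}]-A[s,\{\eta.\}])U^{s,t}[\{\xi.\}]f,\Phi(\eta)_s\big)\,ds$ and, using \eqref{Lip_A}, \eqref{BDD} and $\|\Phi(\eta)_s\|_{\B^*}\le c_3K$, yields $\sup_t\|\Phi(\xi)_t-\Phi(\eta)_t\|_{\D^*}\le c_1c_2c_3KT\,\sup_t\|\xi_t-\eta_t\|_{\D^*}$; dropping \eqref{eq0thkineticeq} costs the contraction but leaves this Lipschitz bound, which is all continuity needs. \emph{Equicontinuity in $t$} is exactly the estimate \eqref{eq2thkineticeq}, whose constant $c_1c_2$ is independent of $\{\xi.\}$, so the family $\{\Phi(\{\xi.\})\}$ is uniformly equicontinuous on $[0,T]$.

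The heart of the argument, and the step I expect to cost the most, is the relative compactness of $\Phi\big(C_\mu([0,T],\MC(\D^*))\big)$ in $C([0,T],\D^*)$, since \eqref{eq1thglobalexistgenkin} only provides compactness of the slices $\{\Phi(\xi)_t:\{\xi.\}\}$ at times $t$ in a dense set $Q\subset[0,T]$. I would first promote this to all $t\in[0,T]$: given $t$ and $\varepsilon>0$, choose $q\in Q$ with $|t-q|$ so small that equicontinuity forces $\|\Phi(\xi)_t-\Phi(\xi)_q\|_{\D^*}<\varepsilon$ uniformly in $\{\xi.\}$, whence the slice at $t$ lies in the $\varepsilon$-neighbourhood of the totally bounded slice at $q$ and is therefore totally bounded; completeness of $\D^*$ then gives relative compactness at $t$. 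Pointwise relative compactness at every $t$ together with the uniform equicontinuity lets the Arzel\`a--Ascoli theorem conclude that $\Phi(C_\mu)$ is relatively compact in $C([0,T],\D^*)$, and since both the constraint $\MC$ (closed in $\D^*$) and the initial value $\mu$ survive uniform limits, its closure remains inside $C_\mu([0,T],\MC(\D^*))$.

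Finally, by Mazur's theorem the closed convex hull of this relatively compact image is a compact convex subset of $C_\mu([0,T],\MC(\D^*))$ which $\Phi$ maps continuously into itself, so the Schauder fixed point theorem yields a fixed point $\{\mu.\}$. By construction it satisfies $\mu_t=\tilde U^{t,0}[\{\mu.\}]\mu$ and hence solves \eqref{weak nonlinear Cauchy problem-GE} for every $f\in\D$; consistently with the loss of the contraction property, no uniqueness is asserted.
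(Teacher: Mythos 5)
Your proposal is correct and takes essentially the same route as the paper's own proof: both realise the solution as a Schauder fixed point of the map $\{\xi.\}\mapsto\{\tilde{U}^{t,0}[\{\xi.\}]\mu\}$ on $C_\mu([0,T],\M(\D^*))$, with continuity supplied by the Lipschitz estimate \eqref{Lip U}, uniform equicontinuity by \eqref{eq2thkineticeq}, and relative compactness of the image by combining the dense-time compactness hypothesis \eqref{eq1thglobalexistgenkin} with the Arzel\`a--Ascoli theorem. The only differences are presentational: you make explicit the promotion of slice compactness from the dense set to all of $[0,T]$ and pass through Mazur's theorem to obtain a compact convex domain, details the paper absorbs into its citation of Arzel\`a--Ascoli and into the form of the Schauder theorem it applies.
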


In Proposition \ref{propmomentcomp} in Section \ref{secnonlinMark}, we give the conditions under which the compactness assumption \eqref{eq1thglobalexistgenkin} holds.

\section{Proofs of the main results}
\label{secabstractnonMark}

{\bf Proof of Theorem \ref{thkineticeq} }

By duality,  for any $\{\xi_.^1\}, \{\xi_.^2\}\in C_{\mu}([0,T],\M(\D^*))$
\begin{equation*}
\begin{split}
(f,(\tilde{U}^{t,0}[\{\xi_.^1\}]- \tilde{U}^{t,0}[\{\xi_.^2\}])\mu)
=((U^{0,t}[\{\xi_.^1\}]- U^{0,t}[\{\xi_.^2\}])f,\mu).
\end{split}
\end{equation*}

Next, we need to estimate the difference of the two propagators. Define an operator-valued function $Y(r):=U^{0,r}[\{\xi_.^2\}]    U^{r,t}[\{\xi_.^1\}]$. Since $U^{t,t}[\{\xi_.^i\}], i=1,2,$ are identity operators, $Y(t)=U^{0,t}[\{\xi_.^2\}] $ and $Y(0)=U^{0,t}[\{\xi_.^1\}] $. By (\ref{Generates}), we get
\begin{equation*}
\begin{split}
&\hspace{.5cm} U^{0,t}[\{\xi_.^2\}]-U^{0,t}[\{\xi_.^1\}]=U^{0,r}[\{\xi_.^2\}]   U^{r,t}[\{\xi_.^1\}]\big|_{r=0}^t\\
&=\int_0^t\frac{d}{dr} \left(U^{0,r}[\{\xi_.^2\}]   U^{r,t}[\{\xi_.^1\}] \right) dr\\
&=\int_0^t \Big( U^{0,r}[\{\xi_.^2\}] A[r,\{\xi_.^2\}] U^{r,t}[\{\xi_.^1\}] -U^{0,r}[\{\xi_.^2\}] A[r,\{\xi_.^1\}] U^{r,t}[\{\xi_.^1\}] \Big)ds\\
&=\int_0^tU^{0,r}[\{\xi_.^2\}](A[r,\{\xi.^2\}]-A[r,\{\xi.^1\}])U^{r,t}[\{\xi_.^1\}] ds.
\end{split}
\end{equation*}
Then, together with assumptions \eqref{Lip_A} and \eqref{BDD},
\begin{equation}
\label{Lip U}
\begin{split}
&||(\tilde{U}^{t,0}[\{\xi_.^1\}]- \tilde{U}^{t,0}[\{\xi_.^2\}])\mu)||_{\D^*}\\
\leq &||U^{0,t}[\{\xi_.^1\}]- U^{0,t}[\{\xi_.^2\}]||_{\D\mapsto \B} ||\mu||_{\B^*}\\
\leq & c_1c_2c_3 tK \|\{\xi_.^1\}-\{\xi_.^2\}\|_{C([0,T],\M(\D^*))}.
\end{split}
\end{equation}
Consequently, if \eqref{eq0thkineticeq} holds, the mapping $\{\xi_.\}\mapsto \{\tilde{U}^{t,0}[\{\xi_.\}]\}_{t\in [0,T]}$ is a contraction in $C_{\mu}([0,T],\M(\D^*))$. Hence by the contraction principle there exists a unique fixed point for this mapping and hence a unique solution to equation \eqref{weak nonlinear Cauchy problem}.

Inequality \eqref{eq2thkineticeq} follows directly from \eqref{weak nonlinear Cauchy problem}.
Finally, if $\Phi^t(\mu) = \mu_t$ and $\Phi^t(\eta) = \eta_t$, then
\begin{equation*}
\begin{split}
\mu_t-\eta_t &= \tilde{U}^{t,0}[\{\mu.\}]\mu-\tilde{U}^{t,0}[\{\eta_.\}]\eta\\
 &= (\tilde{U}^{t,0}[\{\mu.\}]-\tilde{U}^{t,0}[\{\eta.\}])\mu
  +\tilde{U}^{t,0}[\{\eta.\}](\mu-\eta).
\end{split}
\end{equation*}
From \eqref{BDD} and \eqref{Lip U},
\begin{equation}
\label{eq3thkineticeq}
\|\{\mu_.\}-\{\eta_.\}\|_{C([0,T],\M(\D^*))} \leq  c_1c_2c_3 T K \|\{\mu_.\}-\{\eta_.\}\|_{C([0,T],\M(\D^*)))} + c_2\|\mu-\eta\|_{\D^*}
\end{equation}
implying \eqref{eq1thkineticeq}.
 \qed\\[.3em]

{\bf Proof of Theorem \ref{propkineticeqnonlMark} }
The global unique solution of \eqref{eq1propkineticeqnonlMark} is constructed by extending local unique solutions of \eqref{weak nonlinear Cauchy problem} via iterations, as is routinely performed in the theory of ordinary differential equations (ODE).
 \qed\\[.3em]

{\bf Proof of Theorem \ref{thkineticeqnonanticipating} }

For a $\mu \in \MC$, let us construct an approximating sequence $\{\xi^n_.\} \in C([0,T],\MC(\D^*))$,
 $n=0,1,\cdots$, by defining $\xi_t^0=\mu$ for $t\in [0,T]$  and then recursively
\[
\xi^n_t=\tilde {U}^{t,0}[\xi^{n-1}_{\leq t}]\mu, \quad \forall t\in[0,T].
\]
By non-anticipation, arguing as in the proof of \eqref{Lip U} above, we first get the estimate
\[
\sup_{0\leq r\leq t}||\xi^1_{r}-\xi^0_{r}||_{\D^*}\leq c_1c_2c_3Kt,
\]
and then recursively
\[
\sup_{0\leq r\leq t}||\xi^{n}_{r}-\xi^{n-1}_{r}||_{\D^*}\leq c_1c_2c_3K
\int_0^t \sup_{0\leq r\leq s}||\xi^{n-1}_{ r}-\xi^{n-2}_{ r}||_{\D^*}\,ds
\]
that implies (by straightforward induction) that, for all $t\in [0,T]$,
\[
\|\{\xi^{n}_.\}-\{\xi^{n-1}_.\}\|_{C([0,t],\M(\D^*))}
=\sup_{0\leq r\leq t}||\xi^{n}_{r}-\xi^{n-1}_{r}||_{\D^*} \le \frac{1}{n!}(c_1c_2c_3 K t)^n.
\]
Hence, the partial sums on the r.h.s. of the obvious equation
\[
\xi^n_{\leq t}=(\xi^n_{\leq t}-\xi^{n-1}_{\leq t})+\cdots+(\xi^{1}_{\leq t}-\xi^{0}_{\leq t})+\xi^{0}_{\leq t}
\]
converge, and thus the sequence $\xi_{\cdot}^n$ converges in $C([0,T],\D^*)$. The limit is clearly a solution to
\eqref{weak nonlinear Cauchy problem global NE}.

To prove uniqueness and continuous dependence on the initial condition,
let us assume that $\mu_t$ and $\eta_t$ are some solutions with the initial conditions
$\mu$ and $\eta$ respectively. Instead of \eqref{eq3thkineticeq}, we now get
\[
\|\{\mu_.\}-\{\eta_.\}\|_{C([0,t],\M(\D^*))} \leq  c_1c_2c_3 K \int_0^t \|\{\mu_.\}-\{\eta_.\}\|_{C([0,s],\M(\D^*))}\, ds
 + c_2\|\mu-\eta\|_{\D^*}.
\]
By Gronwall's lemma, this implies
\[
\|\{\mu_.\}-\{\eta_.\}\|_{C([0,t],\M(\D^*))} \le c_2\|\mu-\eta\|_{\D^*} e^{c_1c_2c_3 Kt}, \quad t\in[0,T]
\]
yielding uniqueness and Lipchitz continuity of solutions with respect to initial data.
\qed
\\[.3em]

{\bf Proof of Theorem \ref{thglobalexistgenkin}}

Since $\M$ is convex, the space $C_{\mu}([0,T], \M(\D^*))$ is also convex. Since the dual operators $\tilde{U}^{t,0}[\{\xi.\}]$ preserve the set $\M$, for any $\{\xi.\} \in  C_{ \mu}([0,T], \M(\D^*))$, the curve $\tilde{U}^{t,0}[\{\xi.\}]\mu$ belongs to $C_{ \mu}([0,T], \M(\D^*))$ as a function of $t$.
Hence, the mapping $\{\xi.\}\to \{\tilde{U}^{t,0}[\{\xi.\}]\mu, t\in [0,T]\}$ is from $C_{ \mu}([0,T], \M(\D^*))$ to itself. Moreover, by (\ref{Lip U}), this mapping is Lipschitz continuous.

Denote $\hat{C}=\{ \{\tilde{U}^{\cdot,0}[\{\xi.\}]\mu\}:  \{\xi.\}\in C_{\mu}([0,T], \M(\D^*))\}$. Together with (\ref{eq2thkineticeq}), the assumption that set \eqref{eq1thglobalexistgenkin} is compact in $\M$ for any $t$ from a dense subset of $[0,T]$ implies that the set $\hat{C}$
 is relatively compact in $C_{\mu}([0,T], \M(\D^*))\}$ (Arzela-Ascoli Theorem, see A.21 in \cite{K1997}).

Finally, by Schauder fixed point theorem, there exists a fixed point in $\hat C\subset C_{\mu}([0,T], \M(\D^*))$, which gives the existence of a solution to (\ref{weak nonlinear Cauchy problem-GE}).
\qed

\section{Nonlinear Markov evolutions and its regularity}
\label{secnonlinMark}

This section is designed to provide a probabilistic interpretation and, as a consequence, certain regularity properties for nonlinear Markov evolution $\mu_t$ solving kinetic equation (\ref{eq1propkineticeqnonlMark}) in the case when $\B=C_{\infty}(\R^d)$ and $\M=\P(\R^d)$ is the set of probability measures on $\R^d$,
 so that $\B^*$ is the space of signed Borel measures on $\R^d$ and $K=\sup_{\mu \in \P(\R^d)} \|\mu \|_{\B^*}=1$. As a consequence, w shall present a simple 
 criterion for the main compactness assumption of Theorem \ref{thglobalexistgenkin}.  

We will use the following notations.

$C_{Lip}(\mathbf{R}^d)$ is the Banach space of bounded Lipschitz continuous functions $f$ on $\mathbf{R}^d$ with the norm $\|f\|_{{Lip}}=\sup_x|f(x)|+\sup_{x\neq y}\frac{|f(x)-f(y)|}{|x-y|}$.

$C_{\infty}(\mathbf{R}^d)$ is the Banach space of bounded continuous functions $f$ on $\R^d$ with $\lim_{x\rightarrow\infty}f(x)=0$, equipped with sup-norm $\|f\|_{C_{\infty}(\mathbf{R}^d)}:=\sup_x|f(x)|$.

$C_{\infty}^1(\mathbf{R}^d)$ is the Banach space of continuously differentiable and bounded functions $f$ on $\mathbf{R}^d$  such that the derivative $f'$ belongs to $C_{\infty}(\R^d)$, equipped with the norm $\|f\|_{C_{\infty}^1(\mathbf{R}^d)}:=\sup_x|f'(x)|$.

$C_{\infty}^2(\mathbf{R}^d)$ is the Banach space of twice continuously differentiable and bounded functions $f$ on $\mathbf{R}^d$  such that the first derivative $f'$ and the second derivative $f''$ belong to $C_{\infty}(\R^d)$, equipped with the norm $\|f\|_{C_{\infty}^2(\mathbf{R}^d)}:=\sup_x(|f'(x)|+|f''(x)|)$.

Let $\{A[t,\mu]: t\geq 0, \mu\in \P(\R^d)\}$ be a family of operators in $C_{\infty}(\R^d)$ of the L\'evy-Khintchin type, that is
\begin{equation}
\label{fellergenerator 3}
\begin{split}
&A[t,\mu]f(z)=\frac{1}{2}(G(t,z,\mu)\nabla,\nabla)f(z)+ (b(t,z,\mu),\nabla f(z))\\
 &+\int (f(z+y)-f(z)-(\nabla f (z), y){\bf 1}_{B_1}(y))\nu (t,z,\mu,dy),
\end{split}
\end{equation}
where $\nabla$ denotes the gradient operator; for $(t,z,\mu)\in [0,T]\times\R^d\times\P(\R^d)$, $G(t,z,\mu)$ is a symmetric non-negative matrix, $b(t,z,\mu)$ is a vector, $\nu(t,z,\mu,\cdot)$ is a L\'evy measure on $\R^d$, i.e.
\begin{equation}
\label{condlevy0}
\int_{\R^d} \min (1,|y|^2)\nu(t, z, \mu, dy) <\infty, \quad \nu (t, z, \mu, \{0\})=0,
\end{equation}
depending measurably on $t, z, \mu$, and ${\bf 1}_{B_1}$ denotes, as usual, the indicator function of the unit ball in $\R^d$.
Assume that each operator \eqref{fellergenerator 3} generates a Feller process with one and the same domain $\D$ such that $C^2_{\infty}(\R^d) \subseteq \D\subseteq C^1_{\infty}(\R^d)$.

\begin{prop}\label{Prop31}
 Suppose the assumptions of Theorem \ref{propkineticeqnonlMark} are fulfilled with generators $A[t,\mu]$ of type (\ref{fellergenerator 3})
 and a probability measure $\mu$ is given.
 Then there exists a family of processes
 $\{X_{s,t}^{\mu}: \mu\in \P(\R^d)\}$
defined on a certain filtered probability space $(\Om, \FC, \{\FC_t\},\P(\R^d))$
such that $\mu_t=\LC(X_{s,t}^{\mu})$ solves the
Cauchy problem for equation \eqref{eq1propkineticeqnonlMark} with initial condition $\mu$
and $\{X_{s,t}^{\mu}\}$ solves the nonlinear martingale problem, specified by the family $\{A[t,\mu]\}$, that is, for any $f\in \D$,
\begin{equation}
\label{eqdefNMP}
f(X_{s,t}^{\mu})-\int^t_s A[\tau, \LC(X_{s,\tau}^{\mu})]f(X_{s,\tau}^{\mu})d\tau, \quad s\leq t
\end{equation}
is a martingale.
\end{prop}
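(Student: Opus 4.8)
The plan is to decouple the nonlinearity by freezing the measure argument along the solution curve produced by Theorem \ref{propkineticeqnonlMark}, to realise the resulting \emph{linear} time-inhomogeneous evolution as a genuine Markov process, and then to close the loop using the self-consistency of its one-dimensional marginals.

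First I would apply Theorem \ref{propkineticeqnonlMark}: for the given $\mu\in\P(\R^d)$ and a starting time $s$ it yields a unique solution curve $\mu_t=\tilde U^{t,s}(\mu)\in\P(\R^d)$, $t\in[s,T]$, of \eqref{eq1propkineticeqnonlMark}. Freezing the measure, I set $A_\tau:=A[\tau,\mu_\tau]$; by assumption (ii) of Theorem \ref{thkineticeq} this curve of L\'evy--Khintchin operators generates a strongly continuous backward propagator $U^{r,t}$ on $\B=C_\infty(\R^d)$, with dual forward propagator $\tilde U^{t,r}$ preserving $\P(\R^d)$ and satisfying $\tilde U^{t,s}\mu=\mu_t$ by duality.

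Next I would build the process. Since the $U^{r,t}$ are Feller (positivity-preserving, contractive) operators whose duals preserve probability measures, they are represented by honest Markov transition kernels $p(r,x;t,dy)$ through $U^{r,t}f(x)=\int f(y)\,p(r,x;t,dy)$, and the propagator identity $U^{r,\theta}U^{\theta,t}=U^{r,t}$ becomes the Chapman--Kolmogorov relation for $p$. Kolmogorov's extension theorem then provides a Markov process $X^\mu_{s,t}$, $t\in[s,T]$, on a filtered space $(\Om,\FC,\{\FC_t\},\P)$ with these transitions and initial law $\mu$; pushing the law forward gives $\LC(X^\mu_{s,t})=\tilde U^{t,s}\mu=\mu_t$, which is exactly the first assertion. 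For the martingale property I would fix $f\in\D$, use the identity $\E[f(X^\mu_{s,t})\mid\FC_r]=U^{r,t}f(X^\mu_{s,r})$ together with the first equation in \eqref{Generates}, i.e. $\frac{d}{d\tau}U^{r,\tau}f=U^{r,\tau}A_\tau f$, and the resulting cancellation $U^{r,t}f-f=\int_r^t U^{r,\tau}A_\tau f\,d\tau$ to show that
\[
f(X^\mu_{s,t})-\int_s^t A_\tau f(X^\mu_{s,\tau})\,d\tau
\]
is an $\{\FC_t\}$-martingale; the integrand is bounded and measurable because $\|A_\tau f\|_\B\le c_1\|f\|_\D$ and $\tau\mapsto A_\tau f$ is strongly continuous off a null set. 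Finally, since $\mu_\tau=\LC(X^\mu_{s,\tau})$, I would substitute $A_\tau=A[\tau,\LC(X^\mu_{s,\tau})]$ to convert this linear martingale into the nonlinear martingale \eqref{eqdefNMP}.

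I expect the main obstacle to be the rigorous passage from the analytic propagator $U^{r,t}$ to an honest Markov process: verifying that the transition kernels are genuinely conservative (total mass one) and jointly measurable in $(r,x,t)$, that the process can be taken with the required c\`adl\`ag regularity so that $\E[f(X^\mu_{s,t})\mid\FC_r]=U^{r,t}f(X^\mu_{s,r})$ holds, and justifying the interchange of the time integral with conditional expectation in the martingale computation --- all of which hinge on the time-regularity of $\tau\mapsto A_\tau$ supplied by the \emph{generates} hypothesis and on the probability-preservation of the dual propagator.
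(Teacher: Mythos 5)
Your proposal is correct and takes essentially the same route as the paper: freeze the measure argument along the solution curve supplied by Theorem \ref{propkineticeqnonlMark}, realise the resulting linear time-inhomogeneous evolution \(\frac{d}{dt}(f,\nu_t)=(A[t,\mu_t]f,\nu_t)\) as an ordinary (linear) Markov process, and use the self-consistency \(\mu_\tau=\LC(X^\mu_{s,\tau})\) to turn the linear martingale into the nonlinear one \eqref{eqdefNMP}. The paper's proof is just a terser version of yours --- it invokes the standing assumption that each operator of type \eqref{fellergenerator 3} generates a Feller process and treats the kernel construction and the martingale verification as standard, exactly the steps you spell out in detail.
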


\proof
 By the assumptions of Theorem \ref{propkineticeqnonlMark}, a solution $\mu_t\in\P(\mathbf{R}^d)$
 of equation \eqref{eq1propkineticeqnonlMark} with initial condition $\mu_s=\mu$ specifies a propagator $\tilde{U}^{t,r}[\mu_.]$, $s\leq r\leq t$,  of linear transformations in $\B^*$,
 solving the Cauchy problems for equation
\begin{equation}\label{ODE4}
	\frac{d}{dt}(f,\nu_t)=(A[t, \mu_t]f,\nu_t).
\end{equation}
In its turn, for any $\nu \in \P(\R^d)$, equation \eqref{ODE4} specifies marginal distributions of a usual (linear) Markov process $\{X_{s,t}^{\mu}(\nu)\}$ in $\mathbf{R}^d$
with the initial measure $\nu$. Clearly, the process $\{X_{s,t}^{\mu}(\mu)\}$ is a solution to our martingale problem.
\qed

We shall refer to the family of processes constructed in Proposition \ref{Prop31} as to {\it nonlinear Markov process} generated by the family $A[t,\mu]$.

Using martingales allows us to prove the following useful regularity property for the solution of kinetic equations.
\begin{prop}
\label{proppmomentsfornonlin}
Suppose the assumptions of Theorem \ref{propkineticeqnonlMark} are fulfilled for a kinetic equation of "path-independent" type \eqref{eq1propkineticeqnonlMark} with generators
$A[t,\mu]$ of type \eqref{fellergenerator 3}. Let $\{X_{s,t}^\mu\}$ denote a nonlinear Markov process constructed from the family of generators $A[t,\mu]$ by Proposition \ref{Prop31}.
Assume, for $p \in (0,2]$ and $P>0$, the following boundedness condition holds:
\begin{equation}
\label{EBDD}
\sup_{x\in R^d,\,t\geq0, \,\mu\in\M}\max\big\{|G(t,x,\mu)|,|b(t,x,\mu)|,
\int \min (|y|^2, |y|^p) \nu(t,x,\mu,dy)\big\} \le P,
\end{equation}
and the initial measure $\mu_s=\mu$ has a finite $p$th order moment, i.e.
\[
\int |x|^p \mu (dx)=p_{\mu} <\infty.
\]

Then the distributions $\LC(X_{s,t}^{\mu}) =\Phi^{t,s}(\mu)$, solving the Cauchy problem for equation \eqref{eq1propkineticeqnonlMark} with initial condition $\mu_s$ have uniformly bounded $p$th moments, i.e.
\begin{equation}
\label{eq1proppmomentsfornonlin}
\int |x|^p \Phi^{t,s}(\mu) (dx) \le c(T,P)[1+p_{\mu}],
\end{equation}
and are
$\frac{1}{2}$-H\"older continuous with respect to $t$ in the space $(C_{Lip}(\R^d))^*$, i.e.
\begin{equation}
\label{Holder}
||\Phi^{t_1,s}(\mu)-\Phi^{t_2,s}(\mu)||_{(C_{Lip}(\R^d))^*}\leq c(T,P) \sqrt{|t_1-t_2|},
\quad \forall t_1, t_2\geq s \geq 0,
\end{equation}
with a positive constant $c$.
\end{prop}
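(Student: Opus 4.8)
The plan is to exploit the martingale structure from Proposition \ref{Prop31} together with the boundedness condition \eqref{EBDD}. Writing $\mu_t = \LC(X_{s,t}^\mu)$, I would apply the martingale property \eqref{eqdefNMP} not directly to $f(x)=|x|^p$ (which need not lie in the domain $\D$ nor be bounded), but to suitable regularized and truncated test functions $f_N$ that agree with $|x|^p$ on a ball of radius $N$ and are bounded, then pass to the limit $N\to\infty$ at the end. Taking expectations in \eqref{eqdefNMP} kills the martingale and yields, for the moment function,
\begin{equation*}
\frac{d}{dt}\E\,|X_{s,t}^\mu|^p = \E\,\big(A[t,\mu_t]\,|\cdot|^p\big)(X_{s,t}^\mu).
\end{equation*}
The first main step is then to estimate the integrand $A[t,\mu]g(x)$ for $g(x)=|x|^p$ pointwise. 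Using \eqref{fellergenerator 3}, the drift term contributes at most $C|b|\,|x|^{p-1}$, the diffusion term at most $C|G|\,|x|^{p-2}$, and the crucial L\'evy term is controlled by splitting the $y$-integral into $|y|\le 1$ and $|y|>1$: a second-order Taylor estimate on the small-jump part gives a bound by $\int_{|y|\le 1}|y|^2\,\nu$, while the large-jump part is bounded using the elementary inequality $|x+y|^p \le C_p(|x|^p + |y|^p)$ and then $\int_{|y|>1}|y|^p\,\nu$. Both integrals are dominated by $\int \min(|y|^2,|y|^p)\,\nu \le P$ from \eqref{EBDD}.

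**The upshot** of this pointwise estimate is a differential inequality of the form $A[t,\mu]g(x) \le c(P)\,(1+|x|^p)$ uniformly in $t,\mu$, whence
\begin{equation*}
\frac{d}{dt}\E\,|X_{s,t}^\mu|^p \le c(P)\,\big(1+\E\,|X_{s,t}^\mu|^p\big).
\end{equation*}
Applying Gronwall's lemma on $[s,T]$ with initial value $\E\,|X_{s,s}^\mu|^p = p_\mu$ delivers exactly the bound \eqref{eq1proppmomentsfornonlin} with $c(T,P)$ absorbing the Gronwall exponential. For the H\"older estimate \eqref{Holder}, I would test against an arbitrary $f\in C_{Lip}(\R^d)$ with $\|f\|_{Lip}\le 1$ and write $(f,\Phi^{t_1,s}(\mu)-\Phi^{t_2,s}(\mu)) = \E\,[f(X_{s,t_1}^\mu)-f(X_{s,t_2}^\mu)]$; by the Lipschitz bound this is at most $\E\,|X_{s,t_1}^\mu - X_{s,t_2}^\mu|$, and then Jensen gives $\le (\E\,|X_{s,t_1}^\mu - X_{s,t_2}^\mu|^2)^{1/2}$. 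Estimating the increment of the process over $[t_2,t_1]$ via its generator (the diffusion part contributes $|G|\,|t_1-t_2|$, the bounded drift contributes $(|b|\,|t_1-t_2|)^2$, and the jump part contributes $\int|y|^2\nu\cdot|t_1-t_2|$, all controlled by $P$) yields $\E\,|X_{s,t_1}^\mu - X_{s,t_2}^\mu|^2 \le c(T,P)\,|t_1-t_2|$, and taking the supremum over such $f$ produces \eqref{Holder}.

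**The main obstacle** I anticipate is the justification of applying the martingale identity to the unbounded function $|x|^p$: one must either construct the truncations $f_N\in\D$ carefully and control the error from the cutoff as $N\to\infty$, or invoke a localization/stopping-time argument to guarantee that the local martingale is a true martingale with the interchange of expectation and generator being valid. The a priori moment bound and the local martingale are somewhat intertwined here, so the cleanest route is to first establish the estimate for the truncated functions (where everything is rigorous), obtain a uniform-in-$N$ bound on $\E\,|X_{s,t}^\mu\wedge N|^p$, and only then let $N\to\infty$ by monotone convergence. A secondary technical point is verifying that $|x|^p$ (or its truncations) interacts correctly with the domain assumption $C^2_\infty(\R^d)\subseteq\D\subseteq C^1_\infty(\R^d)$ and that the L\'evy-term Taylor estimate is uniform; once \eqref{EBDD} is in hand these are routine but require care for $p<2$ where $|x|^p$ fails to be $C^2$ at the origin.
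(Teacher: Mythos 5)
Your argument for the moment bound \eqref{eq1proppmomentsfornonlin} is essentially sound and is, in effect, a self-contained reconstruction of what the paper does by citation: the paper views $\{X_{s,t}^\mu\}$ as an ordinary (time-inhomogeneous) Markov process for the fixed solution curve $\{\mu_t\}$ and invokes formula (5.61) of \cite{VK2}, which yields the conditional estimate $\E \left[\min \big(|X_{s,t}^{\mu}-\hat x|^2, |X_{s,t}^{\mu}-\hat x|^p\big)\,\big|\,X_{s,s}^{\mu}=\hat x\right] \le e^{C(T,P)(t-s)}-1$; that cited estimate is itself proved by the Dynkin-plus-Gronwall scheme you outline, and your truncation caveats (which are real but manageable, e.g.\ by working with $(1+|x|^2)^{p/2}$ instead of $|x|^p$) are exactly the technical content hidden in the citation. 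Note, however, that the correct object to estimate is the \emph{truncated} centered moment $\min(|z|^2,|z|^p)$, not $|z|^p$ alone; this distinction is what matters below.

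The H\"older estimate \eqref{Holder} is where your proposal has a genuine gap. After the (correct, and identical to the paper's) reduction to $\E\big|X_{s,t_1}^\mu-X_{s,t_2}^\mu\big|$, you pass by Jensen to $\big(\E|X_{s,t_1}^\mu-X_{s,t_2}^\mu|^2\big)^{1/2}$ and assert that the jump part contributes $\int |y|^2\nu\cdot|t_1-t_2|$, ``controlled by $P$''. But hypothesis \eqref{EBDD} bounds only $\int \min(|y|^2,|y|^p)\,\nu$, not $\int |y|^2\,\nu$: for $p<2$ the large jumps $|y|>1$ are required to have finite $p$th moment only, and $\int_{|y|>1}|y|^2\,\nu(t,x,\mu,dy)$ may be infinite (stable-like tails with index in $(p,2)$ are the standard example, and are precisely the processes the paper has in mind). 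Hence $\E|X_{s,t_1}^\mu-X_{s,t_2}^\mu|^2$ can be infinite and the proposed bound collapses; your argument is valid only when $p=2$. The paper avoids forming any second moment of the full increment: from the min-moment estimate it splits according to whether $|X_{s,t}^\mu-\hat x|$ is $\le 1$ or $\ge 1$. On the first event the minimum equals the square, so Cauchy--Schwarz gives the factor $(e^{C(t-s)}-1)^{1/2}\le C\sqrt{t-s}$; on the second event the minimum equals the $p$th power, giving $(e^{C(t-s)}-1)^{1/p}\le C(t-s)^{1/p}$, which is $\le C(T)\sqrt{t-s}$ because $p\le 2$ and $t-s\le T$. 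Summing the two pieces gives $\E\left[|X_{s,t}^\mu-\hat x|\,\big|\,X_{s,s}^\mu=\hat x\right]\le C(T,P)\sqrt{t-s}$ uniformly in $\hat x$, and the Markov property (conditioning at time $t_2$) then yields \eqref{Holder}. You need to replace your Jensen-to-$L^2$ step by this small/large splitting; the difference is not cosmetic, since under \eqref{EBDD} the second-moment route is simply unavailable for $p<2$.
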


\proof
For a fixed trajectory $\{\mu_t\}_{t\geq 0}$ with initial value $\mu$, one can
consider $\{X_{s,t}^\mu\}$ as a usual Markov process.
Using the estimates for the moments of such processes from formula (5.61) of
\cite{VK2} (more precisely, its straightforward extension to time non-homogeneous case),
one obtains from \eqref{EBDD} that
\begin{equation}
\label{eq2proppmomentsfornonlin}
\E \left[\min \big(|X_{s,t}^{\mu}-\hat x|^2, |X_{s,t}^{\mu}-\hat x|^p\big)|X_{s,s}^{\mu}=\hat x)\right]
\le e^{C(T,P)(t-s)}-1.
\end{equation}
This implies \eqref{eq1proppmomentsfornonlin}.
Moreover, \eqref{eq2proppmomentsfornonlin} implies that
 \begin{equation}
\label{eq3proppmomentsfornonlin}
\E \left[|X_{s,t}^{\mu}-\hat x| {\bf 1}_{|X_{s,t}^{\mu}-\hat x| \le 1}|X_{s,s}^{\mu}=\hat x\right[
\le (e^{C(T,P)(t-s)}-1)^{1/2}\le C(T,P) \sqrt {t-s}
\end{equation}
 \begin{equation}
\label{eq4proppmomentsfornonlin}
\E \left[|X_{s,t}^{\mu}-\hat x| {\bf 1}_{|X_{s,t}^{\mu}-\hat x| \ge 1}|X_{s,s}^{\mu}=\hat x\right]
\le (e^{C(T,P)(t-s)}-1)^{1/p} \le C(T,P) (t-s)^{1/p},
\end{equation}
and consequently
 \begin{equation}
\label{eq5proppmomentsfornonlin}
\E \left(|X_{s,t}^{\mu}-\hat x| \, |X_{s,s}^{\mu}=\hat x\right)
 \le C(T,P) \sqrt {t-s},
\end{equation}
where constants $C(T,P)$ can have different values in various formulas above.

Since $\Phi^{t,s}(\mu)$ is the distribution law of the process $\{X_{s,t}^\mu\}$,
\begin{equation}
\label{eq6proppmomentsfornonlin}
\begin{split}
||\Phi^{s,t_1}(\mu)-\Phi^{s,t_2}(\mu)||_{(C_{Lip}(\R^d))^*}
= & \sup_{||\psi||_{C_{Lip}(\R^d)}\leq 1}   \big| \E \psi(X_{s,t_1}^\mu) - \E\psi(X_{s,t_2}^\mu) \big|  \\
\leq & \sup_{||\psi||_{C_{Lip}(\R^d)}\leq 1} \E \big|\psi(X_{s,t_1}^\mu)-\psi(X_{s,t_2}^\mu)\big| \\
\leq & \E \big|X_{s,t_1}^\mu-X_{s,t_2}^\mu\big|.
\end{split}
\end{equation}
From \eqref{eq5proppmomentsfornonlin}, \eqref{eq6proppmomentsfornonlin} and Markov property, we get
\eqref{Holder} as required.
\qed

\begin{remark} For the case of diffusions with $G=1$ , H\"older continuity \eqref{Holder} was proved in \cite{HCM3}.
\end{remark}

Our main purpose for presenting Proposition \ref{proppmomentsfornonlin} lies in the following corollary.

\begin{prop}
 \label{propmomentcomp}
 Under the assumptions of Theorem \ref{thkineticeq} for generators
$A[t,\{\mu_.\}]$ of L\'evy-Khintchin type \eqref{fellergenerator 3}, but without locality condition
\eqref{eq0thkineticeq}, suppose the boundedness condition \eqref{EBDD} holds for some $p \in (0,2]$ and $P>0$.
Then the compactness condition from Theorem \ref{thglobalexistgenkin} (stating that set
\eqref{eq1thglobalexistgenkin} is compact in $\PC(\R^d)$) holds for any initial measure $\mu$
with a finite moment of $p$th order.
 \end{prop}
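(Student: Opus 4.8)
The plan is to reduce the path-dependent problem, for each frozen path, to the path-independent situation already treated in Proposition \ref{proppmomentsfornonlin}, and then to extract the required compactness from a moment bound that is uniform in the path, via Prokhorov's theorem. Thus the whole argument rests on two ingredients: a uniform $p$th-moment estimate, and the standard tightness criterion for probability measures on $\R^d$.

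First I would fix $t\in[0,T]$ and an arbitrary curve $\{\xi_.\}\in C_\mu([0,T],\M(\D^*))$. Freezing $\{\xi_.\}$, the generator $A[t,\{\xi_.\}]$ becomes an ordinary time-inhomogeneous operator of L\'evy--Khintchin type \eqref{fellergenerator 3}, whose coefficients $G$, $b$, $\nu$ still obey the bound \eqref{EBDD} with the same constant $P$ (the supremum in \eqref{EBDD} is taken over all arguments and hence does not see the frozen path). Consequently the frozen family satisfies all the hypotheses of Proposition \ref{proppmomentsfornonlin}, and by Proposition \ref{Prop31} the measure $\tilde{U}^{t,0}[\{\xi_.\}]\mu$ is realized as the law $\LC(X_{0,t})$ of the associated linear Markov process started from $\mu$.

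Next I would apply the moment estimate \eqref{eq1proppmomentsfornonlin} to this frozen problem, obtaining
\begin{equation*}
\int |x|^p \, \big(\tilde{U}^{t,0}[\{\xi_.\}]\mu\big)(dx) \le c(T,P)\,[1+p_\mu].
\end{equation*}
The crucial point is that $c(T,P)$ depends only on $T$ and on the uniform bound $P$ of \eqref{EBDD}, never on the particular curve $\{\xi_.\}$; hence this estimate is uniform over all $\{\xi_.\}\in C_\mu([0,T],\M(\D^*))$. By Chebyshev's inequality this yields, for every $R>0$,
\begin{equation*}
\big(\tilde{U}^{t,0}[\{\xi_.\}]\mu\big)\big(\{|x|>R\}\big) \le R^{-p}\, c(T,P)\,[1+p_\mu],
\end{equation*}
again uniformly in $\{\xi_.\}$, so the family \eqref{eq1thglobalexistgenkin} is tight. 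Prokhorov's theorem then gives its relative compactness in $\PC(\R^d)$, which is exactly the hypothesis invoked in Theorem \ref{thglobalexistgenkin}. Since this holds for every $t\in[0,T]$, and in particular on a dense subset, the conclusion follows.

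The step I expect to be the only delicate one is the transfer of Proposition \ref{proppmomentsfornonlin} to the path-dependent setting together with the uniformity of the constant; the observation that makes it routine is precisely that freezing the path turns $A[t,\{\xi_.\}]$ into a genuine time-dependent L\'evy generator subject to the same bound \eqref{EBDD}, so both the estimate and its constant are automatically independent of $\{\xi_.\}$. A secondary, minor point is the identification of the Prokhorov (weak) topology with the $\D^*$-topology carried by $\M$: on a family with uniformly bounded $p$th moments the tails are uniformly small, and since $\D$ is dense in $C_\infty(\R^d)$ and sits between $C^2_\infty(\R^d)$ and $C^1_\infty(\R^d)$, the $\D^*$-metric is equivalent to a Dudley (bounded-Lipschitz) metric of weak convergence on this family, so weak relative compactness is relative compactness in $\M(\D^*)$.
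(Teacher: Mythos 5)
Your proposal is correct and follows the same route as the paper's (very terse) proof: apply the uniform $p$th-moment bound \eqref{eq1proppmomentsfornonlin} to the frozen-path (hence linear, time-inhomogeneous) evolutions, note that the constant depends only on $T$ and $P$ from \eqref{EBDD} and not on the path, and conclude tightness and hence relative compactness of the family \eqref{eq1thglobalexistgenkin}. Your two additional clarifications --- that freezing the path reduces matters to an ordinary Markov process satisfying the same bound, and that on tight families the weak topology agrees with the $\D^*$-metric since $\D\subseteq C^1_\infty(\R^d)$ --- are exactly the details the paper leaves implicit.
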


 \begin{proof}
 It follows from  \eqref{eq1proppmomentsfornonlin} and an observation that a set of probability laws on $\R^d$ with a bounded $p$th moment, $p>0$, is tight and hence relatively compact.
 \end{proof}

\section{Basic examples of operators $A[t,\mu]$}
\label{examples}

In this section, we present some basic examples of generators that fit to assumptions of our main Theorems and are relevant to the study of mean field games.

Notice that the most nontrivial condition of Theorem \ref{thkineticeq} is (ii), as it concerns the difficult question from the theory of usual Markov process, on when
a given pre-generator of L\'evy-Khintchine type does really generate a Markov process.
Even more difficult is the situation with time-dependent generators, as the standard semigroup methods
(resolvents and Hille-Phillips-Iosida theorem) are not applicable.

\begin{example} Nonlinear L\'evy processes are specified by a families of generators of type (\ref{fellergenerator 3}) such that all coefficients do not depend on $z$, i.e.
\begin{equation}
\begin{split}
A[t,\mu]f(x)=&\frac{1}{2}(G(t,\mu) \nabla,\nabla)f(x)+(b(t,\mu), \nabla f)(x)\\
&+\int [f(x+y)-f(x)-(y, \nabla f(x))\mathbf{1}_{B_1}(y)]\nu(t,\mu,dy). \nonumber
\end{split}
\end{equation}

The following statement is a consequence of Proposition 7.1 from \cite{Ko10}.

\begin{prop}
\label{NLP}
Supposed that the coefficients $G,b,\nu$ are continuous in $t$ and Lipschitz continuous in $\mu$ in the norm of Banach space $(C^2_{\infty}(\R^d))^*$, i.e.
\begin{equation*}
\begin{split}
\|G(t,\mu)-G(t,\eta)\|+&\|b(t,\mu)-b(t,\eta)\|+\int\min(1,|y|^2)|\nu(t,\mu,dy)-\nu(t,\eta, dy)|\\
\leq &c||\mu-\eta||_{(C^2_{\infty}(\R^d))^*},\quad \forall t\geq 0,
\end{split}
\end{equation*}
with a positive constant $c$, then condition (ii) of Theorem \ref{thkineticeq} holds with $\D =C^2_{\infty} (\R^d)$.
\end{prop}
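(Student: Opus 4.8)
The plan is to derive everything from Proposition 7.1 of \cite{Ko10}, whose purpose is exactly to construct a propagator from a time-dependent spatially homogeneous L\'evy--Khintchine pre-generator, and then to check that its hypotheses hold uniformly along every \emph{frozen} curve. Fix a curve $\{\xi_.\}\in C_{\mu}([0,T],\M(\D^*))$. Because the coefficients in this example are independent of the space variable, freezing the path reduces $t\mapsto A[t,\{\xi_.\}]$ to the time-dependent family
\[
A_tf(x)=\tfrac12(G_t\nabla,\nabla)f(x)+(b_t,\nabla f(x))+\int\big(f(x+y)-f(x)-(\nabla f(x),y)\mathbf{1}_{B_1}(y)\big)\nu_t(dy),
\]
with $G_t:=G(t,\xi_t)$, $b_t:=b(t,\xi_t)$, $\nu_t:=\nu(t,\xi_t)$. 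The first step is to observe that these frozen coefficients are continuous in $t$: the curve $t\mapsto\xi_t$ is continuous into $\D^*=(C^2_\infty(\R^d))^*$, while by hypothesis $G,b,\nu$ are continuous in $t$ and Lipschitz in $\mu$ precisely in that norm, so the composition is continuous. This is the only place where the Lipschitz-in-$\mu$ assumption enters.

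The second step is the generation itself. Since $A_t$ is translation invariant, it is diagonalised by the Fourier transform with symbol $\psi_t(p)=-\tfrac12(G_tp,p)+i(b_t,p)+\int(e^{i(p,y)}-1-i(p,y)\mathbf{1}_{B_1}(y))\nu_t(dy)$, and Proposition 7.1 of \cite{Ko10} then produces a strongly continuous backward propagator $U^{t,s}[\{\xi_.\}]$ on $\B=C_\infty(\R^d)$ satisfying the generation relations \eqref{Generates}, with $C^2_\infty(\R^d)$ as common invariant domain. Invariance of $\D$ and the bounds \eqref{BDD} follow from homogeneity essentially for free: convolution operators commute with $\nabla$, so $\nabla U^{t,s}f=U^{t,s}\nabla f$ and likewise for second derivatives, giving $\|U^{t,s}\|_{\D\to\D}\le c_2$ with a constant independent of the curve; and since each $A_t$ generates a conservative, positivity-preserving convolution (Feller) semigroup, $U^{t,s}$ is a contraction on $C_\infty(\R^d)$, so $c_3=1$.

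The third step is the measure preservation of the dual. For a spatially homogeneous generator the dual propagator $\tilde U^{s,t}[\{\xi_.\}]$ acts by convolution with the law of the associated time-inhomogeneous L\'evy process, which is a probability measure; hence $\tilde U^{s,t}$ maps $\P(\R^d)$ into itself and preserves $\M$. Combining the three steps verifies condition (ii) of Theorem \ref{thkineticeq} with $\D=C^2_\infty(\R^d)$.

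The main obstacle is the second step: the actual existence of the propagator together with the fact that $C^2_\infty(\R^d)$ is a genuine common invariant domain on which \eqref{Generates} holds. This is the substantive analytic input, and it is imported wholesale from Proposition 7.1 of \cite{Ko10}; spatial homogeneity, through the Fourier/convolution representation, is exactly what renders the time-dependent case tractable. The remaining ingredients — time-continuity of the frozen coefficients and the operator bounds $c_2,c_3$ — are routine consequences of the hypotheses and of translation invariance.
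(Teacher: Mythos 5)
Your proposal is correct and follows essentially the same route as the paper: the paper offers no argument beyond the one-line statement that the proposition ``is a consequence of Proposition 7.1 from \cite{Ko10}'', which is exactly the result you import as your substantive analytic input. Your additional steps --- freezing the curve $\{\xi_.\}$ to get a time-dependent spatially homogeneous generator, using translation invariance to get $c_2=c_3=1$ via the convolution structure, and noting that the dual acts by convolution with a probability law so that $\P(\R^d)$ is preserved --- are a faithful unpacking of what that citation is meant to deliver.
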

\end{example}

Notice that the most natural examples of a functional $F$ on measures that are Lipschitz continuous (or even smooth) in space $(C^2_{\infty}(\R^d))^*$
are supplied by smooth functions of monomials $\int g(x_1, \cdots , x_n) \mu(dx_1) \cdots \mu (dx_n)$ with sufficient smooth functions $g$.

\begin{example}
\label{MV Diffusion}
McKean-Vlasov diffusion are specified by the following stochastic differential equation
\[
dX_t = b(t, X_t, \mu_t)dt + \sigma(t, X_t, \mu_t)dW_t,
\]
where drift coefficient $b:\R^+\times \mathbf{R}^d \times \P(\mathbf{R}^d)\rightarrow \mathbf{R}^d$, diffusion coefficient $\sigma: \R^+\times\mathbf{R}^d \times \P(\mathbf{R}^d)\rightarrow \mathbf{R}^d$ and $W_t$ is a standard Brownian motion. The corresponding generator is given by
\[
A[t,\mu]f(x) = (b(t, x,\mu), \nabla f(x))+ \frac{1}{2}(G(t, x,\mu), \nabla^2f(x)),\quad f\in C^2_{\infty}(\R^d),
\]
where $G(t, x,\mu)=tr\{\sigma (t, x,\mu)\sigma^T (t, x,\mu)\}$.
It is well known (and follows from Ito's calculus) that if the coefficients of a diffusion are
Lipshitz continuous, the corresponding SDE is well posed, implying the following.

\begin{prop}
\label{propnonlindif}
If $G, b$ are continuous in $t$, Lipshitz continuous in $x$ and Lipschitz continuous in $\mu$ in the topology of $(C^2_{\infty}(\R^d))^*$,
then the condition (ii) of Theorem \ref{thkineticeq} is satisfied.
\end{prop}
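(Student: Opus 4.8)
The plan is to reduce the verification of condition (ii) of Theorem~\ref{thkineticeq} to the classical well-posedness theory for stochastic differential equations with Lipschitz coefficients, here with $\B=C_\infty(\R^d)$, $\M=\P(\R^d)$ and $\D$ satisfying $C^2_\infty(\R^d)\subseteq\D\subseteq C^1_\infty(\R^d)$. First I would fix an arbitrary path $\{\xi_.\}\in C_\mu([0,T],\M(\D^*))$ and freeze it, so that the coefficients become functions of $(t,x)$ only, namely $\tilde b(t,x):=b(t,x,\xi_t)$, $\tilde G(t,x):=G(t,x,\xi_t)$ and $\tilde\sigma(t,x):=\sigma(t,x,\xi_t)$. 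Since $t\mapsto\xi_t$ is continuous in $\D^*$ while $b,G$ are continuous in $t$ and Lipschitz in $\mu$ in the topology of $(C^2_\infty(\R^d))^*$, the frozen coefficients are continuous in $t$; by hypothesis they are Lipschitz in $x$ uniformly in $t$ (I take $\tilde\sigma$, and not merely $\tilde G=\tilde\sigma\tilde\sigma^T$, to be Lipschitz in $x$, as the SDE formulation of Example~\ref{MV Diffusion} requires). The remaining task is then a statement about the single time-inhomogeneous linear generator $A[t,\{\xi_.\}]$ with these frozen coefficients.

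Next I would invoke Ito's theory: for every starting pair $(t,x)$ the SDE $dX_\tau=\tilde b(\tau,X_\tau)\,d\tau+\tilde\sigma(\tau,X_\tau)\,dW_\tau$ has a unique strong solution $X^{t,x}_s$, $s\ge t$, and the family $U^{t,s}f(x):=\E f(X^{t,x}_s)$ defines a backward propagator of bounded linear operators on $C_\infty(\R^d)$ generated by $A[t,\{\xi_.\}]$ on $C^2_\infty(\R^d)\subseteq\D$ in the sense of \eqref{Generates}. Two of the conditions are then immediate: each $U^{t,s}$ is a (conservative, since linear growth rules out explosion) Markov transition operator, hence a contraction on $C_\infty(\R^d)$, giving the second bound in \eqref{BDD} with $c_3=1$; and since $U^{t,s}\mathbf{1}=\mathbf{1}$ and $U^{t,s}$ is positivity preserving, the dual propagator $\tilde U^{s,t}$ maps $\P(\R^d)$ into itself, so $\M$ is preserved. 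Strong continuity on $C_\infty(\R^d)$ (the Feller property) follows from continuous dependence of $X^{t,x}_s$ on $(t,x)$ together with a uniform-in-$x$ bound on $\E|X^{t,x}_s-x|^2$, which ensures both that $C_\infty$ is preserved and that $U^{t,s}f\to f$ as $s\downarrow t$.

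The main obstacle is the invariant-domain requirement and the bound $\|U^{t,s}\|_{\D\to\D}\le c_2$. To control the $\D$-norm of $U^{t,s}f$ I would differentiate the flow $x\mapsto X^{t,x}_s$ in its initial condition: the first variation process $\nabla_xX^{t,x}_s$ satisfies a linear SDE whose coefficients are bounded by the Lipschitz constants of $\tilde b,\tilde\sigma$, so a Gronwall argument yields $\E|\nabla_xX^{t,x}_s|\le e^{c(s-t)}$ uniformly in $x$, whence $\|U^{t,s}f\|_{C^1_\infty}\le c_2\|f\|_{C^1_\infty}$ and $U^{t,s}$ maps $C^1_\infty(\R^d)$ boundedly into itself. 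This already closes the argument if one takes $\D=C^1_\infty(\R^d)$, for which the stated Lipschitz hypothesis is exactly sufficient. The delicate point is that the setting of Theorem~\ref{thkineticeq} asks for a common invariant domain $\D$ containing $C^2_\infty(\R^d)$, and establishing $U^{t,s}\colon C^2_\infty\to C^2_\infty$ boundedly requires the second variation process $\nabla^2_xX^{t,x}_s$, which exists with bounded moments only under differentiability of the coefficients rather than mere Lipschitz continuity. I would therefore either take $\D=C^1_\infty(\R^d)$, or, if the larger domain is genuinely needed for the accompanying regularity results, strengthen the assumption to $b,\sigma$ of class $C^2$ with bounded derivatives in $x$, under which the standard smoothness theory of stochastic flows delivers the full $C^2_\infty$-invariance and a uniform constant $c_2$, thereby completing the verification of condition (ii).
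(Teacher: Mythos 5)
Your overall strategy coincides with the paper's: the paper offers no proof beyond the one-line remark that well-posedness of SDEs with Lipschitz coefficients ``follows from Ito's calculus'', and your frozen-coefficient reduction, the construction $U^{t,s}f(x)=\E f(X^{t,x}_s)$, the contraction bound $c_3=1$ in \eqref{BDD}, the preservation of $\P(\R^d)$ by duality, and the Feller/strong-continuity argument are exactly the detailed version of that remark (your observation that one needs $\sigma$, not merely $G=\sigma\sigma^T$, to be Lipschitz is also a legitimate point of care). Moreover, you correctly identify the one genuinely delicate item that the paper's citation glosses over: the invariant-domain bound $\|U^{t,s}\|_{\D\to\D}\le c_2$ requires differentiability of the stochastic flow in its initial point, hence strictly more regularity of the coefficients than Lipschitz continuity in $x$.

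The gap is in your proposed resolution. Your first fallback, taking $\D=C^1_\infty(\R^d)$, is not admissible: the generator of Example \ref{MV Diffusion} contains the second-order term $\frac12(G(t,x,\mu)\nabla,\nabla)f$, so $A[t,\mu]$ is not even defined on $C^1_\infty(\R^d)$, let alone bounded from it into $\B=C_\infty(\R^d)$; yet condition (i) (inequality \eqref{Lip_Aa}) and the very premise of condition (ii) of Theorem \ref{thkineticeq} require $A[t,\{\xi_.\}]:\D\mapsto\B$ to be a bounded operator, and the norm $\|f\|_{C^1_\infty}=\sup_x|f'(x)|$ gives no control over $\nabla^2 f$. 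So no second-order generator fits the framework with that choice of $\D$, and your claim that this choice ``already closes the argument'' fails at the first step. The only viable route among the two you offer is the second: assume $b,\sigma$ are $C^2$ in $x$ with bounded derivatives (or otherwise establish $C^2_\infty$-invariance of the propagator, e.g.\ via Schauder-type estimates), so that the second variation of the flow has bounded moments and $\|U^{t,s}\|_{C^2_\infty\to C^2_\infty}\le c_2$; this is consistent with the neighbouring Proposition \ref{propstablelikesmooth}, where $C^1$ smoothness of the coefficients in $x$ is explicitly assumed. In short, you have correctly diagnosed that the proposition as stated, with merely Lipschitz coefficients, is not fully justified by the paper's one-line proof, but as written your argument leaves the reader a choice between an inadmissible domain and a strengthened hypothesis, and only the latter actually completes the verification of condition (ii).
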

\end{example}

\begin{example} Nonlinear stable-like processes (including tempered ones) are specified by the families
\begin{equation}
\begin{split}
A[t,\mu]f(x) = &(b(t,x,\mu), \nabla f(x))+\int (f(x+y)-f(x)) \nu(t,x,\mu,dy) \\
&+ \int_0^K d|y| \int_{S^{d-1}}
 a(t,x,s)\frac {f(x+y)-f(x)-(y,\nabla f(x))}{|y|^{\al_t(x,s)+1}}
 \om_t(ds).
\end{split}
\end{equation}
Here $s=y/|y|$, $K>0$, $\om_{t}$ are certain finite Borel measures on
$S^{d-1}$ and $\nu(t,x,\mu,dy)$ are finite measures, $a$, $\al$ are positive bounded functions with
$\al \in (0,2)$.

The following result is a corollary of (a straightforward time-nonhomogeneous extension of) Proposition
4.6.2 of \cite{VK2}.
\begin{prop}
\label{propstablelikesmooth}
If all coefficients are continuous in $t$, $a,\al$ are $C^1$-functions in $x,s$, $b$ and $\nu$ are Lipshitz continuous in $x$ and $\mu$ (with $\mu$ taken in the topology $(C^2_{\infty}(\R^d))^*$,
then all conditions of Theorem \ref{thkineticeq} are satisfied with $\D =C^2_{\infty} (\R^d)$.
\end{prop}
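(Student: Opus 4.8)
The plan is to verify condition (ii) of Theorem \ref{thkineticeq} by decomposing it into two independent tasks: first, establishing that each individual (time- and measure-frozen) operator $A[t,\mu]$ generates a strongly continuous Feller propagator on the common domain $\D = C^2_\infty(\R^d)$ with the required uniform bounds \eqref{BDD}, and second, verifying the Lipschitz estimates \eqref{Lip_A}--\eqref{Lip_Aa} in the measure argument. The first task is exactly what the cited Proposition 4.6.2 of \cite{VK2} provides, once we record its hypotheses. That result handles stable-like operators whose spectral density exponent $\al$ lies in $(0,2)$ and whose coefficients $a,\al$ are $C^1$ in the spatial variables $x,s$; its conclusion is that, for frozen $(t,\mu)$, the operator generates a Feller semigroup preserving $C^2_\infty(\R^d)$ and satisfying operator-norm bounds in both $\D\mapsto\D$ and $\B\mapsto\B$. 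Since here $\B=C_\infty(\R^d)$ and $\D=C^2_\infty(\R^d)$, this yields the invariant domain and the estimates of \eqref{BDD} with constants $c_2,c_3$ uniform in $(t,\mu)$, using the assumed uniform continuity in $t$ and the boundedness of the coefficients.

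First I would frame the time-dependence. The cited proposition is stated for time-homogeneous generators, so I would invoke its ``straightforward time-nonhomogeneous extension'', as the statement already signals: given continuity of all coefficients in $t$, the frozen-in-$\mu$ family $\{A[t,\mu]\}_t$ generates a strongly continuous backward propagator $U^{t,s}[\mu]$ on $\D$, and the uniform bounds survive because the constants in Proposition 4.6.2 depend only on the uniform $C^1$-bounds of $a,\al$ and the sup-bounds of $b,\nu$. The dual propagators automatically preserve $\P(\R^d)$ because the primal propagators are Feller (hence conservative and positivity-preserving), which gives the measure-invariance requirement of condition (ii).

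Next I would turn to the Lipschitz dependence on $\mu$, which is where the hypotheses on $b$ and $\nu$ enter. Writing the difference $A[t,\mu]f - A[t,\eta]f$ and noting that the stable part carries no $\mu$-dependence (the coefficients $a,\al,\om_t$ are independent of $\mu$), only the drift term $(b(t,x,\mu)-b(t,x,\eta),\nabla f(x))$ and the finite-measure jump term $\int(f(x+y)-f(x))(\nu(t,x,\mu,dy)-\nu(t,x,\eta,dy))$ remain. For the drift, the Lipschitz bound of $b$ in $\mu$ in the $(C^2_\infty)^*$-norm, together with $\|\nabla f\|\le\|f\|_{C^2_\infty}$, controls the term by $c\,\|\mu-\eta\|_{(C^2_\infty)^*}\,\|f\|_{\D}$; for the jump term one uses that $f(\cdot+y)-f(\cdot)$, tested against the signed measure $\nu(t,x,\mu,\cdot)-\nu(t,x,\eta,\cdot)$, is controlled via the same norm since $\min(1,|y|^2)$-integrability lets one treat $f(x+y)-f(x)$ as a bounded $C^2$-type increment. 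This gives \eqref{Lip_A} with $\D^*=(C^2_\infty)^*$, while \eqref{Lip_Aa} follows from the uniform boundedness of all coefficients. The main obstacle will be the time-nonhomogeneous extension of the cited generation result: one must check that freezing the measure and letting $t$ vary continuously still yields a genuine strongly continuous propagator with uniform norm bounds, since the resolvent/Hille--Yosida machinery used in the homogeneous case does not directly apply to time-dependent generators, as the text itself emphasizes. I would address this by appealing to standard propagator-construction theory for families with a common core and uniformly bounded, strongly $t$-continuous generators, reducing it to the frozen-time estimates supplied by Proposition 4.6.2.
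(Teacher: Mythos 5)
Your proposal matches the paper's treatment: the paper disposes of this proposition in a single line, declaring it a corollary of ``a straightforward time-nonhomogeneous extension of'' Proposition 4.6.2 of \cite{VK2}, which is exactly the generation result you invoke for condition (ii), including the frozen-coefficient propagator construction and the preservation of probability measures by duality. Your additional explicit verification of the Lipschitz conditions \eqref{Lip_A}--\eqref{Lip_Aa} --- isolating the $\mu$-dependence in the drift and the finite jump part, since the stable-like part carries no $\mu$-dependence --- is correct and simply fills in routine details that the paper leaves implicit.
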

\end{example}

\begin{example}
 \label{OAMO}
Processes of order at most one are specified by the families
\begin{equation}\label{eqshortgeneratornonlin}
 A[t,\mu]f(x)=(b(t,x,\mu),\nabla f(x))+\int_{\mathbf{R}^d}(f(x+y)-f(x))\nu (t,x,\mu,dy), \nonumber
\end{equation}
with the L\'evy measures $\nu$ having finite first moment $\int |y|\nu(t,x,\mu,dy)$.
The next result is established in Theorem 4.17 of \cite{Ko10}.

\begin{prop}
If $b,\nu$ are continuous in $t$ and Lipschitz continuous in $\mu$, i.e.
\begin{equation*}
\begin{split}
||b(t,x,\mu)-b(t,x,\eta)||+&\int |y|\nu(t,x,\mu,dy)-\nu(t,x,\eta, dy)|\\
\leq &c||\mu-\eta||_{(C^1_{\infty}(\R^d))^*},\quad \quad \forall t\geq 0, x\in \R^d
\end{split}
\end{equation*}
and Lipshitz continuous in $x$,
then condition (ii) of Theorem \ref{thkineticeq} is satisfied with $\D =C^1_{\infty} (\R^d)$.
\end{prop}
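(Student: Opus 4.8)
The plan is to verify hypothesis (ii) by freezing the path argument and then applying the generation theorem for processes of order at most one. Fix an arbitrary curve $\{\xi_.\}\in C_{\mu}([0,T],\M(\D^*))$ and substitute it into the coefficients; this turns $A[t,\{\xi_.\}]$ into the purely time-dependent family
\[
L_t f(x)=(b_t(x),\nabla f(x))+\int_{\R^d}\bigl(f(x+y)-f(x)\bigr)\,\nu_t(x,dy),
\]
with $b_t(x):=b(t,x,\xi_t)$ and $\nu_t(x,dy):=\nu(t,x,\xi_t,dy)$. These coefficients are Lipschitz in $x$ and uniformly bounded, the latter because the standing boundedness assumption (i) of Theorem \ref{thkineticeq} controls $|b_t|$ and $\int|y|\nu_t(x,dy)$. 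Crucially, since $t\mapsto\xi_t$ is continuous in $\D^*=(C^1_\infty(\R^d))^*$ while $b,\nu$ are continuous in $t$ and Lipschitz in $\mu$ in this very norm, the frozen coefficients $t\mapsto(b_t,\nu_t)$ depend continuously on $t$; this is precisely the role of the $(C^1_\infty)^*$-Lipschitz hypothesis. It then remains to show that $\{L_t\}$ generates a strongly continuous backward propagator $U^{t,s}$ on $\B=C_\infty(\R^d)$, $t\le s$, with common invariant domain $\D=C^1_\infty(\R^d)$, satisfying the two bounds in \eqref{BDD} and with dual $\tilde U^{s,t}$ preserving $\P(\R^d)$.

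The decisive structural feature is that $L_t$ has order at most one: the jump part only involves the increments $f(x+y)-f(x)$ integrated against a measure of finite first moment, so no second-order compensation is needed and $L_t$ is already well defined on $C^1_\infty$. I would construct $U^{t,s}$ together with the associated time-inhomogeneous Markov process either probabilistically, as the unique solution of the martingale problem (or SDE) with characteristics $(b_t,\nu_t)$, where existence and pathwise uniqueness follow from the Lipschitz-in-$x$ dependence exactly as for classical Lipschitz SDEs, or analytically, via a chronological ($T$-)product of the frozen-coefficient semigroups combined with a convergent Duhamel perturbation series. This is the content of Theorem 4.17 of \cite{Ko10}, whose argument carries over verbatim to the present time-dependent coefficients (the homogeneous statement being applied on small subintervals and pieced together by the chain rule), and it produces a conservative Feller propagator with invariant domain $C^1_\infty$.

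The bound $\|U^{t,s}\|_{\B\mapsto\B}\le 1=:c_3$ is then immediate, since $U^{t,s}$ is a conservative Markov transition operator ($U^{t,s}\ge 0$ and $U^{t,s}\mathbf{1}=\mathbf{1}$); by duality its adjoint $\tilde U^{s,t}$ maps $\P(\R^d)$ into itself, as required. The invariance of $\D$ together with the bound $\|U^{t,s}\|_{\D\mapsto\D}\le c_2$ is the main obstacle. To obtain them I would differentiate the backward Kolmogorov equation $\frac{d}{ds}U^{s,r}f=-L_sU^{s,r}f$ in $x$: the gradient $w:=\nabla U^{s,r}f$ satisfies a linear variational equation whose extra zeroth-order terms are controlled by the $x$-Lipschitz constants of $b_t$ and $\nu_t$, so that Gronwall's inequality yields $\sup_x|\nabla U^{t,s}f(x)|\le e^{C(s-t)}\sup_x|\nabla f(x)|$, hence $c_2=e^{CT}$. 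Making this differentiation rigorous, i.e. showing that the gradient genuinely propagates, remains in $C_\infty$, and that $C^1_\infty$ is invariant, is exactly where the order-at-most-one structure and the $x$-regularity are indispensable, and it constitutes the technical core inherited from \cite{Ko10}. The strong continuity of $U^{t,s}$ in $(t,s)$ finally follows from the $t$-continuity of the coefficients established in the first step, completing the verification of condition (ii) with $\D=C^1_\infty(\R^d)$.
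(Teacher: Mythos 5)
Your proposal is correct and takes essentially the same route as the paper: the paper's entire proof is the citation of Theorem 4.17 of \cite{Ko10}, and your argument likewise reduces the verification of condition (ii) of Theorem \ref{thkineticeq} --- after freezing the measure curve to obtain time-dependent order-at-most-one coefficients --- to that same theorem (in its straightforward time-nonhomogeneous extension). The additional scaffolding you supply (conservativity giving $c_3=1$ and preservation of $\P(\R^d)$ by duality, and the Gronwall-type gradient bound giving invariance of $C^1_\infty(\R^d)$ with $c_2=e^{CT}$) is precisely the content that citation is meant to deliver, so there is no divergence in method.
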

\end{example}

The generators of order at most one describe a variety of models including spatially homogeneous and
mollified Boltzmann equation and interacting $\al$-stable laws with $\al <1$.

\begin{example}
\label{integralform} Pure jump processes are specified by integral generators of the form
\begin{equation}
A[t,\mu]f(x) = \int_{\R^d}(f(y)-f(x))\nu(t, x,\mu,dy).
\end{equation}
If measures $\nu (t, x,\mu_t, u, dy)$ are uniformly bounded,
the conditions of Theorem \ref{thkineticeq} are satisfied with $\D =C_{\infty} (\R^d)$.
For unbounded rates we refer to \cite{Ko10} (and references therein) for a detailed discussion.
\end{example}

Let us note finally that not all interesting evolution of type \eqref{ODE3} satisfy the Lipschitz continuity assumption used in our main results.
For instance, a different type of continuity should be applied for coefficients depending on measures via their quantiles, e.g. value at risk (VAR).
This type of evolution is analyzed in \cite{K2012} inspired by preprint \cite{CKL}.

\end{document}